\newcommand{\A}{\mathbb{A}}
\newcommand{\CC}{\mathbb{C}}
\newcommand{\FF}{\mathbb{F}}
\newcommand{\NN}{\mathbb{N}}
\newcommand{\PP}{\mathbb{P}}
\newcommand{\QQ}{\mathbb{Q}}
\newcommand{\RR}{\mathbb{R}}
\newcommand{\ZZ}{\mathbb{Z}}
\newcommand{\cC}{\mathcal{C}}
\newcommand{\cR}{\mathcal{R}}
\newcommand{\tup}[1]{\overline{#1}}
\newcommand{\atup}{\tup{a}}
\newcommand{\xtup}{\tup{x}}
\newcommand{\valring}{\mathcal{O}_{\!K}}
\newcommand{\maxid}{\mathcal{M}_K}
\newcommand{\Qp}{\QQ_p}
\newcommand{\Zp}{\ZZ_p}
\newcommand{\muQp}{\mu_{\Qp}}
\newcommand{\muK}{\mu_{K}}
\newcommand{\res}{\operatorname{res}}
\newcommand{\ac}{\operatorname{ac}}
\newcommand{\cha}{\operatorname{char}}
\newcommand{\Th}{\operatorname{Th}}
\newcommand{\Pt}{\operatorname{Pt}}
\newcommand{\VF}{\operatorname{VF}}
\newcommand{\RF}{\operatorname{RF}}
\newcommand{\VG}{\operatorname{VG}}
\newcommand{\LDP}{L_\mathrm{DP}}
\newcommand{\Tloc}{T_\mathrm{loc}}
\newcommand{\cCu}{\cC_{\mathrm{u}}}
\newcommand{\muu}{\mu_{\mathrm{u}}}
\newcommand{\mumot}{\mu_{\mathrm{mot}}}
\newcommand{\cCmot}{\cC_{\mathrm{mot}}}
\newtheorem{thm}{Theorem}[section]
\newtheorem{almthm}[thm]{Almost-Theorem}
\newtheorem{cor}[thm]{Corollary}
\newtheorem{prop}[thm]{Proposition}
\theoremstyle{definition}
\newtheorem{defn}[thm]{Definition}
\newtheorem{ex}[thm]{Example}
\numberwithin{equation}{section}
\newenvironment{expl}{\par\smallskip\bgroup\footnotesize \noindent \emph{Explanation:}}{\par\smallskip\egroup}
\newenvironment{rem}{\par\smallskip\bgroup\footnotesize \noindent \emph{Remark:}}{\par\smallskip\egroup}
\title{An Introduction to Motivic Integration}
\author{Immi Halupczok}
\begin{document}

\maketitle

\tableofcontents

This introduction to motivic integration is aimed at readers who have some base knowledge of model theory of valued fields,
as provided e.g.\ by the notes by Martin Hils in this volume. I will not assume a lot of knowledge about valued fields.

\section{Introduction}

Given a non-archimedean local field like the field $\Qp$ of the $p$-adic numbers, one has a natural Lebesgue measure $\muQp$ on $\QQ_p^n$.
Motivic measure is an analogue of $\muQp$ which on the one hand also works in valued fields which do not have a classical Lebesgue measure and which on the other hand works in a field-independent way; motivic integration is integration with respect to that ``measure''.

The sets we want to measure are definable ones (in a suitable language of valued fields).
As an example, let $\phi_{1}(x)$ be the formula $v(x) \ge -1$, where $v$ is the valuation map.
An easy computation (assuming one knows how $\muQp$ is defined) shows that the measure of the set defined by $\phi_{1}$ is
$\muQp(\phi_{1}(\Qp)) = p$ for every $p$.
The same computation also works in any other non-archimedean local field $K$, yielding that $\muK(\phi_{1}(K))$ is equal to the
cardinality of the residue field of $K$. (I will define in Subsection~\ref{sect.val.fld} what a non-archimedean local field is.)

For other formulas $\phi(x)$, the measure of $\phi(K)$ might depend on $K$ in a more complicated way, but it
turns out that $\mu_K(\phi(K))$ can always be expressed in terms of cardinalities of some definable subsets of the residue field.
(This is true only under some assumptions about $K$; in this introduction, I will just write ``for suitable $K$''.)
In this sense, the measure of $\phi(K)$ can be described uniformly in $K$:

\smallskip
{
\leftskip 3em\noindent
Motivic measure expresses the \emph{measure} of a set \emph{in the valued field}\\
in terms of \emph{cardinalities} of sets \emph{in the residue field}.
\par
}
\smallskip

More formally, the motivic measure $\mumot(\phi)$
of a formula $\phi$
is an element of a variant $\cCmot^0$ of the Grothendieck ring of formulas in the ring language, where
the class $[\psi] \in \cCmot^0$ of a ring formula $\psi$ stands for the cardinality of the set defined by $\psi$
in the residue field. For example, for our above example formula $\phi_{1}$, we have $\mumot(\phi_{1}) = [\psi_{1}]$,
where $\psi_{1}$ is a formula defining the residue field itself.

Once measures are expressed uniformly in this way, one can also make sense of this
in valued fields which do not have a Lebesgue measure.
For instance, consider the field $K:= \CC((t))$ of formal power series with complex coefficients
and consider again our formula $\phi_{1}$ from above. As in $\Qp$,
a Lebesgue measure of $\phi_{1}(K)$ would have to be equal to the number of elements of the residue
field, which is $\CC$ in this case. Since $\CC$ is infinite, one can deduce that no (non-trivial translation-invariant)
Lebesgue-measure exists on $K$. However, one can make sense of $\muK(\phi_{1}(K))$ as an element of the Grothendieck ring
of definable sets in $\CC$, namely $\muK(\phi_{1}(K)) = [\psi_{1}(\CC)] = [\CC]$.
Getting such a kind of measure on $\CC((t))$ was the original goal of motivic integration, as invented by Kontsevich.
Indeed, that measure allowed Kontsevich \cite{Kon.mot} to give a simpler an more conceptual proof of a result by Batyrev about invariants of certain manifolds.

\medskip

Once one has a measure, one would also like to be able to integrate.
Lebesgue integration allows us to integrate functions from $K^n$ to $\RR$
for non-archimedean local fields $K$. Motivic integration
should allow us to do this uniformly in $K$ and moreover to generalize
this to other $K$. To this end, we need a field-independent way of
specifying functions $K^n \to \RR$.
This is done by introducing abstract rings of
``motivic functions''; such a motivic function $f$ determines
actual function $f_K\colon K^n \to \RR$ for every suitable $K$,
and the ``motivic integral'' of such an $f$ is an element of the same ring $\cCmot^0$ as above,
expressing the values of the integrals $\int_{K^n} f_K\,d\muK$ for all suitable $K$
in terms of cardinalities of sets in the residue field.

Again, this first allows us to uniformly integrate in all (suitable) non-archimedean local fields
and then also yields a notion of integration in other fields $K$ like $\CC((t))$.
However, for such $K$, the objects we are integrating are not functions $K^n \to \RR$ anymore.
Since the measure on $\CC((t))$ takes values in $\cCmot^0$, one would expect that also the functions
should take values in $\cCmot^0$. This is a good approximation to the truth, but in reality,
to obtain a smoothly working formalism, one needs to work with more abstract
objects than mere functions. The reward is that in many ways, motivic integration behaves like normal integration:
it satisfies a version of the Fubini Theorem and a change of variables formula.

\medskip

In these notes, 
after fixing notations and conventions in Section~\ref{sect.notn}, I will spend three sections
on ``uniform $p$-adic integration''. This is a weak version of motivic integration, which
provides a field independent way of integrating in non-archimedean local fields, but which does not generalize
to other valued fields. There is a whole range of applications for which
uniform $p$-adic integration is already strong enough, and I will give
one such application as a motivation, namely counting congruence classes of solutions of polynomial equations.
The benefit of restricting to uniform $p$-adic integration in these notes is that it can be defined
in a much more down-to-earth way than ``full'' motivic integration, while
many key aspects can already be seen on this version.
In the last section, I will sketch how to get from uniform $p$-adic integration
to motivic integration.

\section{Notation and language}
\label{sect.notn}

\subsection{The valued fields}
\label{sect.val.fld}


Throughout these notes, we will use the following notation:

\begin{itemize}
 \item $K$ is a henselian valued field with value group $\ZZ$ (``Henselian'' means that the conclusion of Hensel's Lemma holds;
 see below for examples.)
 \item $\valring \subseteq K$ is its valuation ring.
 \item $\maxid \subseteq \valring$ is the maximal ideal.
 \item $v\colon K \to \ZZ \cup \{\infty\}$ is the valuation map.
 \item $k$ is the residue field of $K$.
 \item $\res\colon \valring \to k$ is the residue map.
 \item $p \in \PP \cup \{0\}$ always stands for the residue characteristic of $K$, i.e., the characteristic of $k$ (here, $\PP$ denotes the set of primes).
 \item $q$ is the cardinality of $k$. (Usually, $k$ will be finite, and hence $q = p^r$ for some $r$).
 \item $\ac\colon K \to k$ is an angular component map. Formally, this means that $\ac$ is a group homomorphism from $K^\times$ to $k^\times$
      which agrees with $\res$ on $\valring^\times$, extended by $\ac(0) := 0$. The fields $K$ we will be interested in
      have natural angular component maps (associating to a series the most significant coefficient); see below.
\end{itemize}

To various of the above objects, we might sometimes add an index $K$ to emphasize the dependence on $K$,
writing e.g.\ $k_K$ for the residue field and $q_K$ for the cardinality of $k_K$.

If $K$ has characteristic $0$, the residue characteristic can either also be $0$, in which case we say that
$K$ has ``equi-characteristic $0$'', or it is $p \in \PP$; in that case, we say that $K$ has ``mixed characteristic''.
(If $K$ has characteristic $p \in \PP$, then $k$ also has characteristic $p$.)

The main examples of valued fields we are interested in are the following; all of them are complete and hence henselian
(by Hensel's Lemma):

\begin{ex}\label{ex.qp}
The $p$-adic numbers
\begin{equation}\label{eq.Qp}
K = \QQ_p = \{\underbrace{\sum_{i=N}^{\infty} a_i p^i}_{=a} \mid N \in \ZZ, a_i \in \{0, \dots, p-1\}\}. 
\end{equation}
Here, the residue field $k$ is $\FF_p$, and assuming
$a_N \ne 0$ in (\ref{eq.Qp}), we have $v(a) = N$ and $\ac(a) = a_N$. The field $\QQ_p$ has mixed characteristic.
\end{ex}

\begin{ex}\label{ex.pow}
The field
\begin{equation}\label{eq.kt}
K = k((t)) = \{\underbrace{\sum_{i=N}^{\infty} a_i t^i}_{=a} \mid N \in \ZZ, a_i \in k\}
\end{equation}
of formal power series over any field $k$.
As the notation suggests, $k$ is the residue field, and again, assuming
$a_N \ne 0$, we have $v(a) = N$ and $\ac(a) = a_N$. This $K$ either has positive characteristic (if $\cha k > 0$)
or equi-characteristic $0$ (if $\cha k = 0$).
\end{ex}

Valued fields which are locally compact (in the valuation topology) will play a particular role for us,
since on these, one has a Lebesgue measure. Such fields are called non-archimedean local fields. In the following,
I will just write ``local field'' (omitting ``non-archimedean''), since we are not interested in the archimedean ones.

\begin{prop}
Exactly the following valued fields are local fields:
\begin{itemize}
 \item the $p$-adic numbers $\Qp$;
 \item the power series fields $\FF_p((t))$ (where $\FF_p$ is the finite field with $p$ elements);
 \item finite extensions of any of the above.
\end{itemize}
\end{prop}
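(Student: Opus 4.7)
The plan is to prove the two directions separately: first that each of the listed fields is a local field, then that any henselian valued field with value group $\ZZ$ that is locally compact must appear in the list.

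For the easy direction, I would observe that $\Zp = \varprojlim \ZZ/p^n\ZZ$ and $\FF_p[[t]] \cong \varprojlim \FF_p[t]/(t^n)$ are profinite, hence compact, topological rings; since each is an open neighborhood of $0$, the fields $\Qp$ and $\FF_p((t))$ are locally compact. If $L/K$ is a finite extension of a local field $K$, then $L$ is again complete (a finite-dimensional normed space over a complete field is complete) and its residue field $k_L$ is a finite extension of $k_K$, hence still finite; so $\valring_L \cong \varprojlim \valring_L/\maxid_L^n$ is again a compact (profinite) ring, and $L$ is locally compact.

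For the converse, suppose $K$ is henselian with value group $\ZZ$ and locally compact. Then the open subgroup $\valring$ is contained in a compact set and is itself closed, hence compact. Writing $\valring = \varprojlim \valring/\maxid^n$ as topological rings, compactness forces each discrete continuous quotient $\valring/\maxid^n$ to be finite; in particular the residue field $k$ is finite, and the inverse-limit description shows $\valring$ (hence $K$) is complete. Thus $K$ is a complete discretely valued field with finite residue field. I would now split on $\cha K$. If $\cha K = p > 0$, Hensel's lemma provides a Teichmüller section $k \hookrightarrow \valring$ of $\res$ (lifting the $(q-1)$-th roots of unity), and expansion in a uniformizer $\pi$ identifies $K$ with $k((\pi))$; since $[k:\FF_p] < \infty$, this exhibits $K$ as a finite extension of $\FF_p((t))$. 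If $\cha K = 0$, then $\QQ \subseteq K$; since $v(p) > 0$ and $v$ is trivial on all other primes, the restriction of $v$ to $\QQ$ is equivalent to the $p$-adic valuation, and completeness of $K$ yields $\Qp \subseteq K$. Setting $e := v(p)$ and $f := [k:\FF_p]$, one checks that if $\pi$ is a uniformizer and $\omega_1,\dots,\omega_f \in \valring$ are lifts of an $\FF_p$-basis of $k$, then $\{\omega_i \pi^j : 1 \le i \le f,\ 0 \le j < e\}$ is a $\Qp$-basis of $K$ (spanning via the standard successive-approximation argument, and linear independence from the fact that distinct monomials have distinct valuations modulo $e$). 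Hence $[K:\Qp] = ef < \infty$.

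The main obstacle is the structural identification in the last step: producing the isomorphism $K \cong k((\pi))$ in equal characteristic and bounding $[K:\Qp]$ by $ef$ in mixed characteristic. Both use Hensel's lemma plus completeness to assemble elements of $K$ out of uniformizer powers and lifts from the residue field, and together they amount to the Cohen structure theorem for complete discrete valuation rings; with this result in hand, the classification is immediate.
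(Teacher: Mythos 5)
The paper states this proposition without proof; it is the classical classification of non-archimedean local fields, and the author simply cites it as known background. So there is no paper argument to compare against, and I can only assess your proposal on its own merits. Your approach is the standard one and it is substantially correct: establish compactness of $\valring$, deduce finiteness of the residue field and completeness from $\valring \cong \varprojlim \valring/\maxid^n$, and then either invoke the Cohen structure theorem or carry out the expansion-in-a-uniformizer / $\Qp$-basis argument by hand, splitting on $\cha K$.

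Two small points of precision. First, you write that ``$\valring$ is contained in a compact set and is itself closed, hence compact.'' A compact neighborhood of $0$ need not contain $\valring$; it is only guaranteed to contain some ball $\maxid^n$. The fix is immediate: $\maxid^n$ is closed (its complement is a union of cosets, each open) in a compact set, hence compact, and multiplication by $\pi^{-n}$ is a homeomorphism $\maxid^n \to \valring$, so $\valring$ is compact after all --- but the argument should be routed through the rescaling rather than assumed outright. Second, the phrasing ``Writing $\valring = \varprojlim \valring/\maxid^n\dots$, compactness forces \dots, and the inverse-limit description shows $\valring$ is complete'' is a little circular as stated, since the identification of $\valring$ with that inverse limit \emph{is} the content of completeness; it should be presented as a consequence of compactness (injectivity from $\bigcap\maxid^n = 0$, surjectivity because nested nonempty compact cosets have nonempty intersection) rather than as a premise. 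With those two adjustments the proof is clean; the final bookkeeping in both the equal- and mixed-characteristic cases (Teichm\"uller lift, and the $\{\omega_i\pi^j\}$ basis with $[K:\Qp]=ef$) is correct.
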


\subsection{The language}
\label{sect.lang}

We will consider $K$ as a structure in a suitable language. Since we are not interested in syntactic properties,
the precise language does not matter, provided that it yields the right definable sets. Let us fix a convenient
language nevertheless, namely the Denef--Pas language $\LDP$, which is a three-sorted language consisting of the following:

\begin{itemize}
 \item one sort $\VF$ for the valued field itself, with the ring language $\{+, -, \cdot, 0, 1\}$ on it;
 \item one sort $\RF$ for the residue field, also with the ring language;
 \item one sort $\VG$ for the value group, with the language $\{+, -, 0, <\}$ of ordered abelian groups;
 \item the valuation map $v \colon \VF \to \VG \cup \{\infty\}$;
 \item the angular component map $\ac\colon \VF \to \RF$.
\end{itemize}

We will use the notations $\VF$, $\RF$, $\VG$ (instead of $K$, $k$, $\ZZ$) if we want to speak about the sorts without
fixing a specific valued field.

\subsection{Definable sets}
\label{sect.def}

By a ``definable set'' $X$, we will mostly mean a field-independent object like $\VF$, $\RF$, $\VG$:
Such an $X$ is in reality just a formula, but using different notation:
$X_K$ is the set defined by the formula in a structure $K$, and we use set theoretic notation
like $X \cap Y$, $X \times Y$, etc.\ for definable sets $X, Y$.
In a similar way, given two definable sets $X, Y$,
by a ``definable function $f\colon X \to Y$'', we mean a formula
defining a function $f_K\colon X_K \to Y_K$ for every $K$.

We will always work in some fixed theory $T$ (see the next subsection). Whenever we write statements like
$X = Y$ or $X \subset Y$ for definable sets $X$, $Y$, we mean that $X_K = Y_K$ or $X_K \subset Y_K$ holds for every $K \models T$.
(In particular, ``$X$'' is really a formula up to equivalence modulo $T$.)




\subsection{The theory}
\label{sect.th}

In Sections~\ref{sect.meas}, \ref{sect.int} and \ref{sect.proof}, the fields we will be interested in will be
``local fields of sufficiently big residue characteristic''. We denote the ``corresponding'' theory by $\Tloc$:
\[
\Tloc := \bigcup_{K\text{ local field}}\Th(K) \cup \{\cha k \ne p \mid p \in \PP\}.
\]
Indeed, a sentence follows from $\Tloc$ if and only if it holds in all local fields $K$
of sufficiently big residue characteristic. (Here, the implication ``$\Rightarrow$'' uses compactness.)
In particular, according to Subsection~\ref{sect.def}, for definable $X, Y$, ``$X = Y$''
means that we have $X_K = Y_K$ for all local fields $K$ of sufficiently big residue characteristic.

The theory $\Tloc$ can also be described more explicitly: it is the theory of henselian valued fields with value group elementarily equivalent to $\ZZ$
and with pseudo-finite residue field of characteristic $0$.
In Section~\ref{sect.mot}, we will also consider a theory $T_0 \subset \Tloc$, which is
the same as $\Tloc$ except that the condition that the residue field is pseudo-finite has been dropped.
In particular, for any $k$ of characteristic $0$, $K := k((t))$ is a model of $T_0$.

\subsection{A key proof ingredient: quantifier elimination}
\label{sect.qe}

The proofs in these notes use various ingredients, but all those ingredients follow
from one single result, namely Denef--Pas Quantifier Elimination. Even though I will
not really explain how quantifier elimination implies the ingredients, I feel that I should
at least state it:


\begin{thm}[{\cite[Theorem~4.1]{Pas.cell}}]\label{thm.qe}
Any $\LDP$-formula is equivalent, modulo $T_0$, to an $\LDP$-formula without quantifiers running over $\VF$.
\end{thm}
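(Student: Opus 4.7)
The plan is to apply the standard embedding criterion for partial quantifier elimination: to eliminate $\VF$-quantifiers modulo $T_0$, it suffices to show that for any sufficiently saturated $M, N \models T_0$ sharing a common substructure $A$ whose embeddings into the $\RF$- and $\VG$-sorts of $M$ and $N$ are already elementary, every $\alpha \in \VF(M)$ realizes the same full $\LDP$-type over $A$ as some $\beta \in \VF(N)$ whenever their quantifier-free $\LDP$-types over $A$ agree. This reduces the theorem to an embedding-extension problem for henselian valued fields.

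First I would put formulas in a normal form: it suffices to consider $\exists x\,\phi(x,\bar y,\bar \xi,\bar \gamma)$ with $\phi$ a boolean combination of atoms of the form $f(x,\bar y) = 0$, valuation comparisons $v(f(x,\bar y)) \leq v(g(x,\bar y))$ (possibly shifted by an integer or by some $\gamma_i$), and angular-component equalities $\ac(f(x,\bar y)) = \xi$, for polynomials $f, g$ over $\ZZ$. Since $\cha k = 0$ forces $\cha K = 0$, we are in equicharacteristic zero, so every algebraic extension is automatically separable. Factoring each polynomial over a henselization of $A_{\VF}$ via Hensel's lemma further reduces each atom to a boolean combination of conditions on $v(x - a)$ and $\ac(x - a)$ for elements $a$ algebraic over $A_{\VF}$.

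Second, I would extend the embedding of $A_{\VF}$ so as to send $\alpha$ into $\VF(N)$. If $\alpha$ is algebraic over $A_{\VF}$, its quantifier-free type determines its minimal polynomial together with enough residue-field and value-group data to single out a Hensel-unique root inside $\VF(N)$. If $\alpha$ is transcendental, the quantifier-free $\LDP$-type of $\alpha$ over $A_{\VF}$ is encoded in the cut $\{(v(\alpha - a), \ac(\alpha - a)) : a \in A_{\VF}\}$. When this extension contributes either a new residue-field element or a new value-group element, a matching $\beta \in \VF(N)$ is obtained directly by saturation. The remaining case is the immediate transcendental extension, where neither residue field nor value group grows; this is handled by Kaplansky's uniqueness theorem for maximal immediate extensions in residue characteristic zero.

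The hard part is this last immediate case: one must show that two henselian valued fields with the same residue field and value group, each containing a transcendental element realizing the same valuation-ac cut over $A_{\VF}$, are $\LDP$-isomorphic over $A_{\VF}$. The $\ac$-map is essential here, rigidifying what the valuation alone would leave ambiguous, and the assumption $\cha k = 0$ is needed both to secure separability throughout (so Hensel's lemma applies in its strong form) and to invoke Kaplansky's theorem, whose conclusion can fail in positive residue characteristic.
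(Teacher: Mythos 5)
The paper does not prove this theorem at all: it is cited from Pas, and the paper itself remarks (end of Section~5) that in Pas's original article quantifier elimination is \emph{deduced from} cell decomposition, which is established first by an inductive Hensel/preparation argument. Your route is genuinely different: you propose the model-theoretic back-and-forth embedding test, reducing the theorem to extending a partial $\LDP$-embedding across one $\VF$-element at a time. That is the approach one finds in more modern treatments of relative quantifier elimination for henselian fields, and the ingredients you name --- factoring over the algebraic closure to reduce atomic conditions on $f(x)$ to conditions on $v(x-a)$ and $\ac(x-a)$, Hensel's lemma handling the algebraic case, saturation handling extensions that enlarge $\RF$ or $\VG$, and Kaplansky's equicharacteristic-zero uniqueness theorem for the immediate transcendental case --- are indeed the right ones, and they are exactly where $\cha k=0$ is used. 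The trade-off is that Pas's route produces the cell decomposition (which this paper needs later anyway) as a by-product, whereas the embedding test gives quantifier elimination directly and cell decomposition must then be extracted afterwards.

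Two small inaccuracies are worth flagging. First, the embedding criterion as you phrase it is circular: ``every $\alpha$ realizes the same full type over $A$ as some $\beta$ whenever their quantifier-free types agree'' assumes the very equivalence you are trying to prove. The correct formulation is that for every $\alpha\in\VF(M)$ there exists $\beta\in\VF(N)$ so that the identity on $A$ together with $\alpha\mapsto\beta$ extends to an $\LDP$-embedding of the generated substructure $\langle A,\alpha\rangle$ into $N$, with the image's $\RF$- and $\VG$-parts remaining elementary so the process can be iterated. Second, in the immediate case you do not want to show that two \emph{fields} are $\LDP$-isomorphic over $A$; you only show that $A_{\VF}(\alpha)$ and $A_{\VF}(\beta)$, equipped with their induced $v$ and $\ac$, are isomorphic over $A$, using that in an immediate extension $\ac$ is already forced by the data on $A$ together with the residue map. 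With those adjustments your sketch is a correct blueprint, just a different one from the cell-decomposition argument the paper points to.
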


\begin{rem}
The formulation of \cite[Theorem~4.1]{Pas.cell} sounds as if this kind of quantifier elimination is only
obtained in each model $K$ of $T_0$ individually. However, all proofs are uniform in $K$, as stated at
the beginning of \cite[Section~3]{Pas.cell}. Also, many other accounts of quantifier elimination directly state
the stronger version.
\end{rem}

Using that the only symbols in $\LDP$ connecting the different sorts are the valuation map and
the angular component map, one obtains a rather precise description of formulas without 
$\VF$-quantifiers and hence also of sets defined by such formulas.

\section{Measuring}
\label{sect.meas}

As already stated, in this section (and also in the next two), we are interested in
``local fields of sufficiently big residue characteristic'' and hence we work in the theory $\Tloc$ (see Subsection~\ref{sect.th}).
In particular, $K$ will always be a local field (and we will always use the notation from Subsection~\ref{sect.val.fld}).

\subsection{Motivation: Poincaré series}

Let me start by introducing a question which will serve as a motivating application.

Let $V$ be an affine variety defined over $\ZZ$, say, given by polynomials
$f_1, \dots, f_\ell \in \ZZ[\xtup]$ in variables $\xtup = (x_1, \dots, x_n)$.
We use the usual notation from algebraic geometry for ``$R$-rational points of $V$'':
For any ring $R$ (commutative, with unit), we write
\[
V(R) = \{\xtup \in R^n \mid f_1(\xtup) = \dots = f_\ell(\xtup) = 0\}.
\]
A problem coming from number theory consists in determining the cardinalities $N_m := \#V(\ZZ/m\ZZ)$ for $m \in \NN$.
Using the Chinese Remainder Theorem, this can be reduced to the case where $m = p^s$ for $p \in \PP$ and $s \in \NN$.
Now one question is: How does $N_{p^s}$ depend on $p$ and on $s$?

To understand the dependence on $s$ for a fixed prime $p$, one considers the associated Poincaré series:
\begin{defn}\label{defn.poin}
The \emph{Poincaré series} associated to $V$ and $p$ is the formal power series
\[
P_{V,p}(T) := \sum_{s = 0}^\infty N_{p^s} T^s \in \QQ[[T]].  
\]
\end{defn}

An intriguing result by Igusa (later generalized by Denef and Meuser) is that this series is a rational function in $T$:
\begin{thm}\label{thm.poin}
$P_{V,p}(T) = g_p(T)/h_p(T)$ for polynomials $g_p(T), h_p(T) \in \QQ[T]$.
\end{thm}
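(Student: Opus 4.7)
The plan is to encode the Poincaré series as a $p$-adic integral, essentially Igusa's local zeta function, and then exploit the structure of definable sets in $\Qp$ to compute it. First I would rewrite $N_{p^s}$ as a Haar-measure volume. For each $s \geq 0$, let
\[
A_s := \{x \in \Zp^n \mid v(f_i(x)) \geq s \text{ for all } i = 1, \dots, \ell\}.
\]
Reduction modulo $p^s$ sends $A_s$ onto $V(\ZZ/p^s\ZZ)$ with fibres cosets of $(p^s\Zp)^n$ of Haar measure $p^{-sn}$, so $\muQp(A_s) = N_{p^s} \cdot p^{-sn}$ and
\[
P_{V,p}(T) = \sum_{s \geq 0} (p^n T)^s \, \muQp(A_s).
\]

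Next I would introduce the Igusa-type zeta function. Define the definable function $\phi\colon \Zp^n \to \NN \cup \{\infty\}$ by $\phi(x) := \min_i v(f_i(x))$, so that $A_s = \{\phi \geq s\}$, and set
\[
Z(T) := \int_{\Zp^n} T^{\phi(x)} \, d\muQp \;=\; \sum_{t \geq 0} \muQp(\{\phi = t\}) \, T^t,
\]
with the convention $T^\infty = 0$. Expanding $\muQp(A_s) = \sum_{t \geq s} \muQp(\{\phi = t\}) + \muQp(\{\phi = \infty\})$, interchanging the order of summation, and summing the resulting finite geometric series, an elementary manipulation yields
\[
P_{V,p}(T) = \frac{1 - p^n T \cdot Z(p^n T)}{1 - p^n T}.
\]
It therefore suffices to prove that $Z(T) \in \QQ(T)$.

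The crux is then to show $Z(T)$ is rational in $T$. By Denef--Pas quantifier elimination (Theorem~\ref{thm.qe}), the set $\{x \in \Zp^n \mid \phi(x) = t\}$ is defined by a formula without $\VF$-quantifiers. Applying the associated cell decomposition, I would partition $\Zp^n$ into finitely many cells on each of which $\phi$ takes an affine form $\phi(x) = \alpha\, v(\xi(x)) + \beta$ for some definable center $\xi$, and on which the fibres in the last valued-field variable are balls of controlled radius. The measure of the slice $\{\phi = t\}$ intersected with such a cell then has the shape $c \cdot p^{-\gamma t - \delta}$ for integers $\gamma, \delta$ and a rational constant $c$, valid for $t$ running through some arithmetic progression. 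Summing the resulting geometric series in $T$ over $t$, and then over finitely many cells, produces a rational function.

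I expect the main obstacle to be precisely this cell-decomposition step: controlling the behaviour of $v(f_i(x))$ on cells is the substantive input and requires the full strength of Denef--Pas cell decomposition, the companion result to Theorem~\ref{thm.qe}. Igusa's original proof sidesteps this by invoking Hironaka's resolution of singularities to make the $f_i$ monomial in local coordinates, after which the integral becomes a product of geometric series; the approach fitting the framework of these notes is cell decomposition, after which the remainder of the argument is bookkeeping with geometric series.
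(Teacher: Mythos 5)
Your strategy---rewrite $N_{p^s}$ as a Haar volume of the definable set $A_s$, package the generating series via an Igusa-type zeta function, and then attack with cell decomposition---is in line with what the paper does for its parallel result. Your $A_s$ is exactly the paper's family $X_{\Qp,s}$, your volume identity is the paper's \eqref{eq.mu.poin}, and the algebraic reduction $P_{V,p}(T) = \bigl(1 - p^n T\, Z(p^n T)\bigr)/(1 - p^n T)$ checks out. The remaining computation via cell decomposition and geometric series is exactly the content the paper splits into Theorem~\ref{thm.intu} (integrate out $\VF$-variables) and Theorem~\ref{thm.urat} (rationality of Presburger sums).

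There is, however, a genuine gap relative to the statement as written. Theorem~\ref{thm.poin} claims rationality for \emph{every} prime $p$, but the Denef--Pas quantifier elimination you invoke (Theorem~\ref{thm.qe}) is a statement modulo $T_0$, i.e.\ in residue characteristic zero; it yields consequences for $\Qp$ only uniformly, for $p \gg 1$. This is exactly why the paper states and proves only the uniform version Theorem~\ref{thm.poin.u} and never claims to prove Theorem~\ref{thm.poin} itself. Applying Theorem~\ref{thm.qe} (or the cell decomposition it underlies) to a single fixed $\Qp$---in particular for the finitely many ``bad'' small primes---is not licensed in this framework. To get rationality for each individual $p$ one needs a genuinely fixed-field input: Macintyre's quantifier elimination for $\Qp$ together with Denef's (single-field) cell decomposition, which is how Denef's original proof proceeds, or Igusa's resolution-of-singularities argument which you mention at the end. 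As it stands, your argument establishes Theorem~\ref{thm.poin} only for sufficiently large $p$, which is the content of Theorem~\ref{thm.poin.u} rather than of Theorem~\ref{thm.poin}.
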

\begin{expl}
The equation makes sense in the field $\QQ((T))$. Another way of stating it is: If one formally multiplies the
power series $P_{V,p}(T)$ by the polynomial $h_p(T)$, the series one obtains is actually a polynomial, namely $g_p(T)$.
\end{expl}

\begin{ex}
If $V = \A^1$ (i.e., $\xtup = x_1$ and no polynomial equation at all), we have $N_{p^s} = p^{s}$ and hence
\[
P_{V,p}(T) = \sum_{s\ge 0} p^sT^s = \frac{1}{1-pT}.
\]
\end{ex}

The two polynomials $g_p(T)$ and $h_p(T)$ together entirely determine how $N_{p^s}$ depends on $s$,
so the next question is how $g_p(T)$ and $h_p(T)$ depend on $p$.
The first claim is that the degrees of $g_p$ and $h_p$ can be bounded independently
of $p$. Moreover, one can describe how their coefficients depend on $p$: the ones of $h_p$ are just polynomials
in $p$, and those of $g_p$ are given by cardinalities of definable sets in the residue field.
To avoid some technicalities, we make these claims only for sufficiently big $p$.
Here is the precise statement:

\begin{thm}\label{thm.poin.u}
Let $V$ be an affine variety defined over $\ZZ$ (as before).
Then there exist ring formulas $\phi_0, \dots, \phi_d \subset \RF^r$ 
and a polynomial $h \in \QQ[p,T]$ such that
for $p \gg 1$, we have
\[
P_{V,p}(T) = \frac{\sum_{i=0}^d \#\phi_{i}(\FF_p)\cdot T^i}{h(p,T)}.
\]
\end{thm}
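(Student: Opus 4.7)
The plan is to convert $P_{V,p}(T)$ into a $p$-adic integral that fits the uniform $p$-adic integration framework, and then extract the rational form claimed using cell decomposition. First I would observe that a point of $V(\ZZ/p^s\ZZ)$ lifts to a coset $x + p^s\Zp^n \subset \Zp^n$ whose representatives satisfy $v(f_i(x)) \ge s$ for all $i$; each such coset has Haar measure $p^{-sn}$ (with $\muQp(\Zp^n) = 1$), giving
\[
N_{p^s} = p^{sn}\,\muQp(W_s), \qquad W_s := \{x \in \Zp^n : v(f_i(x)) \ge s \text{ for all } i\}.
\]
Setting $w(x) := \min_i v(f_i(x)) \in \NN \cup \{\infty\}$, which is Denef--Pas definable, one has $W_s = \{w \ge s\}$, and a formal manipulation of geometric series then yields
\[
(1 - p^n T)\,P_{V,p}(T) = 1 - \sum_{m \ge 0} \muQp(\{w = m\})\,(p^n T)^{m+1},
\]
with the contribution from $\{w = \infty\}$ absorbed into $\muQp(\Zp^n) = 1$.

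Next I would describe $\muQp(\{w = m\})$ uniformly in $m$ and $p$. This is where the machinery of uniform $p$-adic integration enters, resting ultimately on Denef--Pas quantifier elimination (Theorem~\ref{thm.qe}) and cell decomposition: the latter partitions $\Zp^n$ into finitely many uniformly (in $p$) definable cells on which $w$ is affine in the $\VG$-coordinates, and integrating cell-by-cell should yield a finite presentation of the form
\[
\muQp(\{w = m\}) = \sum_{j} \mathbf{1}[m \equiv r_j \bmod N_j]\cdot \#\psi_j(\FF_p)\cdot p^{a_j m + b_j},
\]
valid for $p \gg 1$, where the $\psi_j \subset \RF^{r_j}$ are ring formulas and the integers $a_j, b_j, r_j, N_j$ are independent of $p$. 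The indicator records the congruence data produced by the $\VG$-sorted part of each cell.

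Substituting this into the previous identity turns the double sum into a finite combination of arithmetic-progression-restricted geometric series
\[
\sum_{\substack{m \ge 0 \\ m \equiv r_j \bmod N_j}} p^{a_j m + b_j}\,(p^n T)^{m+1},
\]
each summing to a rational function in $p$ and $T$ with denominator of the shape $1 - p^{(a_j + n)N_j}T^{N_j}$. Clearing a common denominator gathers these factors, together with $1 - p^n T$, into a single $h(p,T) \in \QQ[p,T]$. Absorbing integer powers of $p$ into residue-field formulas via $\#\FF_p^k = p^k$, combining positive integer coefficients by disjoint unions, and pushing any remaining rational scalars into $h$, one then groups the numerator by powers of $T$ to obtain $P_{V,p}(T) = (\sum_{i=0}^d \#\phi_i(\FF_p)\, T^i)/h(p,T)$ for suitable ring formulas $\phi_i$; the bound $d$ comes from the finiteness of the cell decomposition.

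The hard part is the middle step: producing the explicit quasi-polynomial shape of $\muQp(\{w = m\})$ in $m$, uniformly in $p$. This is precisely the substantive content of uniform $p$-adic integration applied to the definable family $\{w = m\}$ indexed by $m \in \VG$, and it requires the full strength of Denef--Pas cell decomposition together with a careful accounting of how $\RF$- and $\VG$-sorted data combine under integration. The initial reduction and the final generating-function rearrangement are formal and should present no essential difficulty once the structural description in the middle step is available.
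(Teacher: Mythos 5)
Your proposal follows the same underlying strategy as the paper --- rewrite $N_{p^s}$ as $p^{sn}$ times the Haar volume of a definable subset of $\Zp^n$ and then run a cell-decomposition/Presburger argument to extract the rational generating function --- but the bookkeeping and the organization are genuinely different. The paper stays with the cumulative family $X_{\Qp,s} = \{w\ge s\}$ and proceeds modularly: first Theorem~\ref{thm.intu} gives that $s\mapsto\mu_K(X_{K,s})$ is a motivic function in $\cCu(\VG_{\ge 0})$, then Theorem~\ref{thm.urat} shows abstractly that any element of $\cCu(\VG_{\ge 0})$ has a rational generating series, using orthogonality of $\RF$ and $\VG$, and finally quantifier elimination downgrades $\LDP$-definable residue sets to ring formulas. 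You instead do an Abel summation up front, pass to the ``level sets'' $\{w=m\}$, and compress the whole middle into a single explicit formula for $\mu(\{w=m\})$.

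Two small corrections to that middle step. First, the explicit shape you assert, $\sum_j \mathbf{1}[m\equiv r_j]\,\#\psi_j(\FF_p)\,p^{a_j m + b_j}$, is more restrictive than what cell decomposition actually gives: summing a geometric series over a $\VG$-interval inside a cell produces factors of the form $1/(1-q^{-\ell})$, and more generally the membership you can prove is that $m\mapsto\mu(\{w=m\})$ lies in $\cCu(\VG_{\ge 0})$, which allows denominators $h(p)\in\ZZ[p]$ beyond mere powers of $p$. (For the specific family $\{w=m\}$ one can see a posteriori from $\mu(\{w=m\}) = p^{-mn}N_{p^m}-p^{-(m+1)n}N_{p^{m+1}}$ that the value lies in $\ZZ[p^{-1}]$, but the cell-by-cell argument does not obviously show this.) This is harmless for the conclusion --- these $1/h(p)$ factors just migrate into $h(p,T)$ at the denominator-clearing step --- but your phrase ``pushing any remaining rational scalars into $h$'' should explicitly include such $p$-dependent factors, which are not scalars. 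Second, the exponents $a_j,b_j$ arising from Presburger-definable $\VG$-functions are in general rationals whose denominators divide $N_j$ rather than integers; they become integer exponents only after restricting to the congruence class $m\equiv r_j\bmod N_j$, which your generating series does, so again this is only an imprecision. Finally, you implicitly assume the residue-field sets produced by cell decomposition are already ring formulas; the paper makes this a separate step, reducing $\LDP$-definable subsets of $\RF^r$ to ring-definable ones via Theorem~\ref{thm.qe}, and you should too. None of these affects the soundness of the strategy, which is correct; what the paper's modular factorization buys you is that the ``hard middle step'' you flag is isolated into reusable theorems (Theorem~\ref{thm.intu} and Theorem~\ref{thm.urat}) that apply to arbitrary definable families and motivic functions, not just this one.
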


In these notes, we will show how Theorem~\ref{thm.poin.u} can be proven using uniform $p$-adic integration, which
we will start introducing now.

\begin{rem}
Readers familiar with Poincaré series will note that only rather specific polynomials can arise as $h(p,T)$.
One does obtain this using the methods presented in these notes; I am omitting this only for simplicity
of the presentation.
\end{rem}

\subsection{Uniform $p$-adic measure}

Let us first fix a local field $K$, e.g.\ $K = \Qp$.
On such a $K$, there is a unique translation invariant measure $\muK$ that associates the measure $1$ to the valuation ring $\valring$.

\begin{figure}
\includegraphics{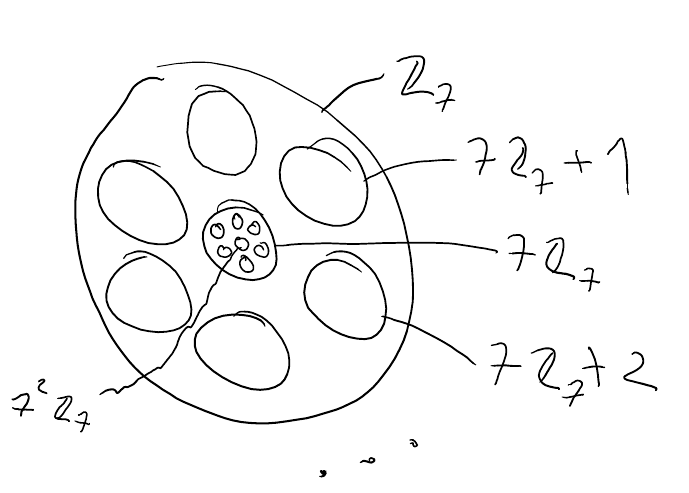}
\caption{$\ZZ_7$ is the union of $7$ translates of $7\ZZ_7$; $7\ZZ_7$ is the union of $7$ translates of $7^2\ZZ_7$; etc.}
\label{fig.measure}
\end{figure}

\begin{expl}
Existence and uniqueness of $\muK$ follows very generally from the fact that $(K,+)$ is a locally compact topological group
($\muK$ is the Haar measure of that group), but it can also easily be seen in a down-to-earth way.
In the case $K = \QQ_p$, for example, we define $\muQp(\Zp) := 1$. Then, using that $\Zp$ is the disjoint union of $p$ translates
of $p\Zp$, we deduce $\muQp(p\Zp) = p^{-1}$, and then, in a similar way, $\muQp(p^r\Zp) = p^{-r}$ for any $r \in \ZZ$ (see Figure~\ref{fig.measure}).
Arbitrary measurable sets can then be approximated by disjoint unions of such balls.
\end{expl}

\begin{ex}\label{ex.qu}
If $p \ge 3$, then 
the measure of the set $X := \{x^2 \mid x \in \Zp\}$ of squares in the valuation ring is
$\frac{p}{2(p+1)}$. This can be obtained as follows. First one proves, using Hensel's Lemma, that
an element $x\in \Zp$ is a square if and only if $v(x)$ is even and $\ac(x)$ is a square in the residue field $k = \FF_p$ (see Figure~\ref{fig.squares}).
Thus $X$ is the disjoint union of the sets $X_{r,a} := \{x \mid v(x) = r, \ac(x) = a\}$, where $r$ runs over $2\NN$ and $a$ runs
over the non-zero squares in $\FF_p$. (More precisely, $X$ additionally contains $0$, but $\muQp(\{0\}) = 0$.)
Now $\muQp(X_{r,a}) = p^{-r-1}$ and $\FF_p$ contains $\frac{p-1}2$ non-zero squares, so
\[
\muQp(X) = \frac{p-1}2\sum_{i = 0}^\infty p^{-2i-1},
\]
which, as a little computation shows, is equal to $\frac{p}{2(p+1)}$.
\end{ex}

\begin{figure}
\includegraphics{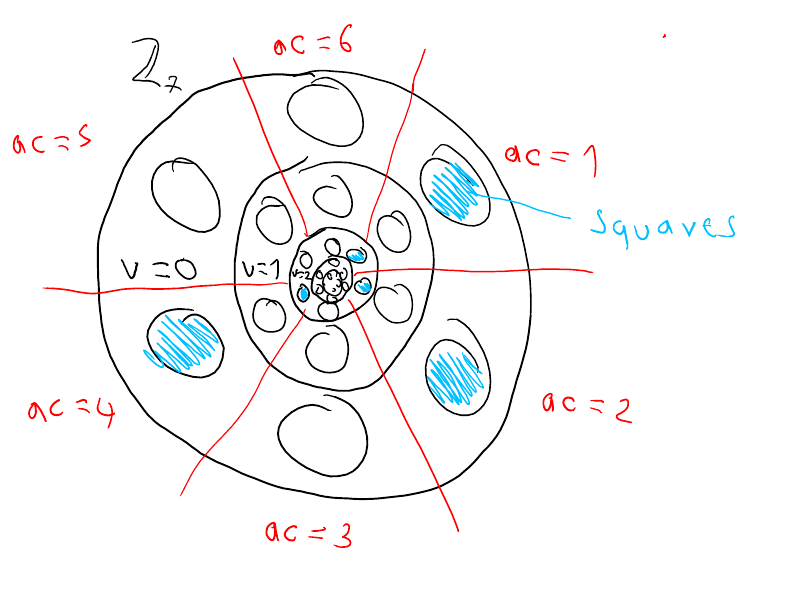}
\caption{Squares in $\ZZ_7$: The valuation has to be even, and the angular component has to be a square in $\FF_7$ (i.e., 1, 2 or 4).}
\label{fig.squares}
\end{figure}

We want to measure definable sets. It is not clear whether definable sets are always measurable
in local fields $K$ of positive characteristic (since the model theory of those fields
is not understood), but from quantifier elimination (Theorem~\ref{thm.qe}), one can deduce the following:

\begin{prop}
Given any definable set $X \subset \VF^n$, the set $X_K$ is measurable for any local field $K$ with $p \gg 1$
(i.e., of sufficiently big residue characteristic, where the bound might depend on $X$).
\end{prop}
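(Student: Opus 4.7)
The plan is to reduce the statement to a syntactic analysis of formulas by invoking Denef--Pas Quantifier Elimination (Theorem~\ref{thm.qe}), then to check that every set definable without $\VF$-quantifiers pulls back to a Borel subset of $K^n$. Since $K$ is a locally compact Polish topological group and $\muK$ is its Haar measure, Borel sets are automatically $\muK$-measurable, so Borelness is all we need.

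First I would apply Theorem~\ref{thm.qe} to replace the formula defining $X$ by an equivalent $\LDP$-formula $\phi$ with no quantifiers running over $\VF$. Using the remark after Theorem~\ref{thm.qe} that the only symbols of $\LDP$ linking $\VF$ to the other two sorts are $v$ and $\ac$, such a $\phi$ can be rewritten in the form
\[
\phi(\xtup) \;\equiv\; \psi\bigl(\ac(g_1(\xtup)),\dots,\ac(g_r(\xtup)),\, v(h_1(\xtup)),\dots,v(h_s(\xtup))\bigr),
\]
where $g_1,\dots,g_r,h_1,\dots,h_s \in \ZZ[\xtup]$ are polynomials and $\psi$ is an $\LDP$-formula whose free variables live only in $\RF$ and $\VG$ (and whose remaining quantifiers therefore run only over $\RF$ and $\VG$).

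Next I would argue Borelness of $X_K$ by decomposing along these auxiliary sorts. For each tuple $(\bar\alpha,\bar\gamma) \in k^r \times (\ZZ \cup \{\infty\})^s$, the ``fibre condition'' $\ac(g_i(\xtup))=\alpha_i$ and $v(h_j(\xtup))=\gamma_j$ defines a locally closed subset of $K^n$: the polynomials $g_i,h_j$ are continuous $K^n \to K$, the valuation $v$ is continuous for the natural topology on $\ZZ \cup \{\infty\}$, and $\ac$ is locally constant on $K^\times$. Hence each fibre set is Borel, and
\[
X_K \;=\; \bigcup_{(\bar\alpha,\bar\gamma)\in S} \{\xtup \in K^n : \ac(g_i(\xtup))=\alpha_i,\ v(h_j(\xtup))=\gamma_j\},
\]
where $S \subset k^r \times \ZZ^s$ is the subset defined by $\psi$ in the two-sorted structure on $(k,\ZZ)$.

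Finally I would bound $S$: since $K$ is a local field, $k$ is finite, so the $\RF$-coordinates contribute only finitely many values; the $\VG$-coordinates range over the countable set $\ZZ$. Thus the above union is countable, and a countable union of Borel sets is Borel. Hence $X_K$ is Borel, and therefore $\muK$-measurable. The main thing to be careful about is the very first step, the honest invocation of the ``precise description of formulas without $\VF$-quantifiers'' alluded to after Theorem~\ref{thm.qe} — i.e.\ verifying that every such formula really can be put into the shape $\psi(\ac(g_i),v(h_j))$ above; once that structural statement is in hand, the measure-theoretic part is essentially automatic.
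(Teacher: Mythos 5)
Your proposal is correct and is precisely the argument the paper has in mind: the paper gives no proof, saying only that the proposition follows ``from quantifier elimination (Theorem~\ref{thm.qe}),'' and your route — eliminate $\VF$-quantifiers, write the resulting formula as $\psi(\ac(g_1(\xtup)),\dots,v(h_s(\xtup)))$ (using that $\ac$ and $v$ are the only cross-sort symbols, and that $g(\xtup)=0$ is equivalent to $\ac(g(\xtup))=0$), decompose $X_K$ as a countable union of locally closed fibres indexed by $k^r\times(\ZZ\cup\{\infty\})^s$, and conclude Borelness and hence Haar-measurability — is the natural completion of that hint. The only point worth making explicit is why the equivalence modulo $T_0$ transfers to local fields of large $p$: by compactness the equivalence is a consequence of finitely many axioms of $T_0$, among which only finitely many statements $\cha k\neq p$ occur, so the equivalence holds once $p$ exceeds those finitely many primes.
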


\begin{rem}
The bound on $p$ is not needed in mixed characteristic, but in these notes, we will only be interested in big $p$ anyway.
\end{rem}

Now we would like to know: Given a definable set $X \subset \VF^n$,
how does the measure $\muK(X_K)$ depend on $K$ for $p \gg 1$?

To make this question more formal,
we consider the ring $\cR^0$ consisting of tuples $(a_K)_K$, $a_K \in \RR$, where $K$ runs
over all non-archimedean local fields, and where two tuples are identified if they agree for all $K$ of sufficiently
big residue characteristic.

We define the ``uniform measure'' of a definable set $X$ to be
\[
\muu(X) := (\mu_K(X_K))_K \in \cR^0,
\]
provided that $\mu_K(X_K) < \infty$ for all $p \gg 1$.
(Note that this is a well-defined element of $\cR^0$ even though $X_K$ might not be measurable for small $p$; moreover,
two definable sets which we identify according to Subsection~\ref{sect.def} have the same uniform measure.)

Our goal is now to prove that for every $X$, $\muu(X)$ already lies in a subring $\cCu^0 \subset \cR^0$ which
is much smaller than $\cR^0$ and given very explicitly:

\begin{defn}\label{defn.cCu0}
Let $\cR^0$ be as defined above, and
let $\cCu^0 \subset \cR^0$ be the subring
%
generated by the following tuples:
\begin{enumerate}
 \item 	
   $(\#Z_K)_K$, where $Z \subset \RF^\ell$ is a definable set (for any $\ell$); and
 \item 
   $(1/h(q_K))_K$, where $h \in \ZZ[q]$ is a polynomial and $q_K$ is the
   cardinality of the residue field of $K$.
\end{enumerate}
\end{defn}


As announced, our aim is to prove:

\begin{thm}\label{thm.measu}
Suppose that $X \subset \VF^n$ is a definable set such
that $\muK(X_K) < \infty$ for all $K$ with $p \gg 1$. Then $\muu(X) \in \cCu^0$.
\end{thm}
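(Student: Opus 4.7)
The plan is to apply Denef--Pas quantifier elimination (Theorem~\ref{thm.qe}) to reduce the problem to computing measures of very explicit pieces, and then to read off the measure cell-by-cell as an element of $\cCu^0$. I would proceed by induction on $n$. For $n = 0$, the statement is trivial. For the inductive step, after applying Theorem~\ref{thm.qe}, we may assume $X$ is defined by a quantifier-free $\LDP$-formula whose only $\VF$-terms are polynomials in $x_1, \dots, x_n$; thus every atomic condition involves either a polynomial equality $f(x) = 0$ (which can only affect a measure-zero piece and so can be dropped from the top-dimensional part), or the valuation $v(f(x))$, or the angular component $\ac(f(x))$.

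Next I would appeal to a cell decomposition, a standard consequence of Denef--Pas QE, to separate the last coordinate: partition $X$ into finitely many cells on which, for each fixed $(x_1, \dots, x_{n-1})$ in the projection and each admissible choice of residue field and value group parameters, $x_n$ ranges over a basic set of the form $\{x_n : v(x_n - c) = m,\ \ac(x_n - c) = \xi\}$, where $c$, $m$ and $\xi$ are definable from the lower-index data. The $\muK$-measure of such an $x_n$-slice is exactly $q_K^{-m-1}$ (or $0$ in degenerate cases), a monomial in $q_K^{-1}$ with exponent an integer-affine function of the $\VG$-parameters.

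Applying Fubini reduces $\muu(X)$ to summation/integration of such monomials over a projected set $X' \subset \VF^{n-1} \times \RF^a \times \VG^b$: the $\VF^{n-1}$-direction is handled by the inductive hypothesis, the $\RF^a$-direction contributes cardinalities $\#Z_K$ (generator~(1) of $\cCu^0$), and the $\VG^b$-direction reduces to summing a polynomial in $q_K^{-1}$, with exponents linear in the summation variables, over a definable subset of $\ZZ^b$.

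The main obstacle is this last value-group summation: one must show that such a sum converges, for $p \gg 1$, to a rational function in $q_K$ whose denominator lies in $\ZZ[q_K]$, \emph{uniformly} across all local fields $K$. Definable subsets of $\ZZ^b$ are Presburger-definable and so can be partitioned into pieces described by congruences and linear inequalities whose shape does not depend on $K$; the hypothesis $\muK(X_K) < \infty$ forces the exponents appearing in each unbounded direction to be strictly negative, so the resulting geometric sums converge to a finite $\ZZ[q_K^{\pm 1}]$-linear combination of terms $\prod_j (1 - q_K^{-c_j})^{-1}$ with positive integers $c_j$. Clearing $q_K$'s from the numerator puts the result in the subring generated by (1) and (2) in Definition~\ref{defn.cCu0}, establishing $\muu(X) \in \cCu^0$.
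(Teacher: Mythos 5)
Your overall engine---quantifier elimination, cell decomposition in the last $\VF$-variable, and a Presburger-arithmetic summation over $\VG$---is indeed what drives the paper's argument. But the induction on $n$ as you set it up does not close. After integrating out $x_n$ you are left with a \emph{function} on $\VF^{n-1}$ (times some auxiliary $\RF$/$\VG$-parameters), namely a sum of terms like $\#Z_{K,\cdot}\cdot q_K^{-m(\cdot)-1}$, not with the measure of a definable set; so ``the $\VF^{n-1}$-direction is handled by the inductive hypothesis'' does not follow, because your inductive hypothesis only covers the constant~$1$ integrand. Worse, after the very next step the integrand may also involve terms linear in a $\VG$-valued definable function (the paper's generator~(3) in Definition~\ref{defn.cCun}); see Example~\ref{ex.log}, where integrating $q^{v(x)}$ over a bounded cell produces a factor $\beta$. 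Your scheme never produces or tracks these, which is a symptom of the hypothesis being too weak to be self-propagating.

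The paper fixes exactly this by introducing the rings $\cCu(X)$ of motivic functions (Definition~\ref{defn.cCun}, including generators~(3) and~(4)) and proving the stronger statement Theorem~\ref{thm.intu}, that these rings are closed under integrating out a $\VF$-variable; Theorem~\ref{thm.measu} is then obtained in two lines by taking $S = \Pt$, $f = 1$, and checking $\cCu(\Pt) = \cCu^0$. If you strengthen your inductive hypothesis to ``for any motivic function $f$ on $X \subset S\times\VF^{n}$, the partial integral lies in $\cCu(S)$,'' your cell-decomposition and Presburger-summation ideas do carry through and become essentially the paper's argument (see Subsections~\ref{sect.proof.meas}--\ref{sect.closed.gen}). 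As written, though, there is a genuine gap at the inductive step.
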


Here are two examples motivating the generators of $\cCu^0$:

\begin{ex}
Let $Z$ be any definable subset of $\RF^n$, and let $X := \res^{-1}(Z)$ be its preimage in $\valring^n$.
Then an easy computation shows that for any $K$, we have
$\mu_K(X_K) = q_K^{-n}\#Z_K$. Thus $\muu(X)$ is equal to the product of $(\#Z_K)_K$ (a generator of the form (1)) and $(1/q_K^n)_K$ (a generator of the form (2)).
\end{ex}

This example shows that all elements of the form (1) are needed in $\cCu^0$. Note that the numbers $\#Z_K$ may depend on $K$
in a quite complicated way; even if $Z$ is a variety over the residue field, it is not really understood how $\#Z_K$
depends on the finite field $k$. In the entire theory developed in these notes, the functions $K \mapsto \#Z_K$ are used as a black box.

The following example shows that one also needs more complicated polynomials in (2):

\begin{ex}
Let $X$ be the set of squares in the valuation ring. The same computation as in Example~\ref{ex.qu}
shows that whenever the residue characteristic $p$ is at least $3$, we have
$\mu_K(X_K) = \frac{q_K}{2(q_K+1)}$.
Thus $\muu(X)$ is equal to the product of $(\#\RF_K)_K$ (a generator of the form (1)) and $(1/(2(q_K+1)))_K$ (a generator of the form (2)).
\end{ex}

Apart from asking about the measure of a single definable set $X$, we can also ask how the measure varies in a definable family,
i.e., given a definable set $X \subset S \times \VF^n$ (where $S$ is a definable set living in any sorts), 
how does
the measure $\muQp(X_{\Qp, s})$ of the fiber $X_{\Qp, s} = \{x \in \Qp^n \mid (s, x) \in X_{\Qp}\}$ depend on $s \in S_{\Qp}$?
This will be needed for our application to Poincaré series.

Before getting back to Poincaré series, let me mention
a nice consequence of Theorem~\ref{thm.measu}, namely an Ax--Kochen/Ershov transfer principle for measuring:
\begin{cor}
Given any definable set $X \subset \VF$, there exists an $N$ such that
if $K$ and $K'$ are local fields with the same residue field $k$ and $k$ has characteristic $\ge N$,
then $\mu_K(X_K) = \mu_{K'}(X_{K'})$.
\end{cor}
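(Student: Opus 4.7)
The plan is to apply Theorem~\ref{thm.measu} and exploit the fact that each generator of $\cCu^0$ depends only on the residue field of $K$. A short preliminary step disposes of the infinite-measure case: by cell decomposition (a consequence of Theorem~\ref{thm.qe}) $X$ splits up to measure zero into finitely many cells $\{x : v(x) \in A_i,\ \ac(x) \in B_i\}$, so $\muK(X_K) < \infty$ iff no cell has both $A_i \subset \VG$ unbounded below and $B_i \subset \RF$ nonempty. For $K, K'$ sharing a residue field $k$, the values of $\#B_i$ are the same on both sides (and the $A_i$-condition is a first-order property of $\VG$ pinned down by $\Th(\ZZ,+,<)$), so either both measures are infinite — in which case the equality is trivial — or both are finite, and I may assume the latter.

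Then Theorem~\ref{thm.measu} yields $\muu(X) \in \cCu^0$, so $\muu(X) = P\bigl((\#Z_{i,K})_K,\,(1/h_j(q_K))_K\bigr)$ for some polynomial $P \in \ZZ[\bar y]$, finitely many definable sets $Z_i \subset \RF^{\ell_i}$, and polynomials $h_j \in \ZZ[q]$. The factor $1/h_j(q_K)$ depends only on $q_K = |k_K|$. For $\#Z_{i,K}$ I invoke orthogonality of the sorts: by Theorem~\ref{thm.qe} the formula defining $Z_i$ is rewritable without $\VF$-quantifiers. Since its free variables lie in $\RF$, since $\LDP$ has no function symbol from $\VG$ into $\RF$, and since the only $\VF$-constants are $0$ and $1$, every atomic subformula binding one of those free variables reduces to a polynomial equation in the ring language on $\RF$; any purely $\VG$-subformula is a sentence whose truth is pinned down by $\Th(\ZZ,+,<)$. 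Hence $Z_i$ is equivalent modulo $\Tloc$ to a ring-language formula $\psi_i$, and $\#Z_{i,K} = \#\psi_i(k_K)$ depends only on $k_K$ as a ring.

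Choosing $N$ large enough to absorb the thresholds from Theorem~\ref{thm.measu}, from the measurability of $X_K$, and from the equivalences above, I conclude that for local fields $K, K'$ with common residue field $k$ of characteristic $\ge N$, one has $\muK(X_K) = \muu(X)_K = \muu(X)_{K'} = \mu_{K'}(X_{K'})$, which is the desired statement.

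The main obstacle is the orthogonality step: extracting from Denef--Pas QE that $\LDP$-definable subsets of $\RF^\ell$ are already definable in the pure ring language on $\RF$. This is standard in the three-sorted setting but requires a brief case analysis of which atomic $\LDP$-formulas can bind an $\RF$-variable, rather than merely an appeal to the bare statement of Theorem~\ref{thm.qe}.
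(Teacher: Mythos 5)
Correct, and essentially the same argument as the paper's: apply Theorem~\ref{thm.measu} and observe that each element of $\cCu^0$ depends only on the residue field, with the $\LDP$-definable $Z \subset \RF^\ell$ reducing to ring-language sets via quantifier elimination (the paper offers Ax--Kochen/Ershov as an alternative). You additionally spell out the orthogonality argument and handle the possibly-infinite-measure case (which Theorem~\ref{thm.measu} formally excludes and which the paper's proof silently skips), both of which are sound housekeeping rather than a different route.
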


To see this, it suffices to note that for $a = (a_K)_K \in \cCu^0$, $a_K$ only depends on $k$ (for $p \gg 1$).
For the generators (2) in Definition~\ref{defn.cCu0}, this is immediately clear; for the generators (1),
this follows from the classical Ax--Kochen/Ershov transfer principle (or from quantifier elimination).

Results further below in these notes imply various other Ax--Kochen/Ershov like results, but I will
not go further into this.

\subsection{Application to Poincaré series}

Recall that we want to understand how $N_{p^s} := \#V(\ZZ/p^s\ZZ)$ depends on $p$ and $s$, where $V$ is an affine variety
given by polynomials $f_1, \dots, f_\ell$ (in $n$ variables). We will now express these cardinalities as measures of definable sets.
For this, first
note that we have an isomorphism of rings $\ZZ/p^s\ZZ \cong \Zp/p^s\Zp$. Then we have
\[
V(\ZZ/p^s\ZZ) = V(\Zp/p^s\Zp) = \underbrace{\{ \xtup \in \Zp^n \mid f_1(\xtup), \dots, f_\ell(\xtup) \in p^s\Zp\}}_{=: X_{\Qp,s}}/\mathord{\sim},
\]
where $\atup \sim \atup'$ iff $\atup - \atup' \in (p^s\Zp)^n$, and where $X_{\Qp,s}$ is a union of entire $\sim$-equivalence classes.
Each such equivalence class has measure $\muQp((p^s\Zp)^n) = p^{-sn}$, so
\begin{equation}\label{eq.mu.poin}
N_{p^s} = \#V(\Zp/p^s\Zp) = p^{sn}\muQp(X_{\Qp,s}).
\end{equation}
Note also that $X_{\Qp,s}$ is a definable family of sets, parametrized by $s$ as an element of the value group.
Now we can formulate a result similar to Theorem~\ref{thm.poin.u} for arbitrary such families:

\begin{thm}\label{thm.murat.u}
Suppose that $X \subset \VG_{\ge 0} \times \VF^n$ is a definable set,
and suppose that for every local field $K$ with $p \gg 1$ and
for every $s \in \NN$, we have 
$\muK(X_{K,s}) < \infty$. Then there exist definable sets $Z_0, \dots, Z_d \subset \RF^r$ 
and a polynomial $h \in \QQ[q,T]$ such that
whenever the residue characteristic of $K$ is sufficiently big, we have
\begin{equation}\label{eq.murat}
\sum_{s = 0}^\infty \muK(X_{K,s}) T^s = \frac{\sum_{i=0}^d \#Z_{i,K}T^i}{h(q_K,T)},
\end{equation}
where $q_K$ is the cardinality of the residue field.
\end{thm}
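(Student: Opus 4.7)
The plan is to establish a parametric refinement of Theorem~\ref{thm.measu} --- that the function $s \mapsto \muK(X_{K,s})$ on $\VG_{\ge 0}$ lies in a natural ``constructible'' class of functions of $s$ and $K$ --- and then to verify that summing such functions against $T^s$ yields a rational function of the required shape.

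For the first step, I would run the proof of Theorem~\ref{thm.measu} in families, keeping track of the parameter $s$. Using Denef--Pas quantifier elimination (Theorem~\ref{thm.qe}) together with a parametric cell decomposition of $X \subseteq \VG_{\ge 0} \times \VF^n$, one expects a finite description
\[
\muK(X_{K,s}) = \sum_{j \in J} \frac{\#Y_{j,K}}{h_j(q_K)} \cdot q_K^{\alpha_j(s)} \cdot P_j(s)
\]
valid for $s$ ranging over a Presburger-definable piece $S_j \subseteq \VG_{\ge 0}$ (and zero off each piece), where $h_j \in \ZZ[q]$, $Y_j \subseteq \RF^{r_j}$ is definable, $\alpha_j \colon \VG_{\ge 0} \to \VG$ is Presburger-definable, and $P_j \in \QQ[s]$. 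Compared to Theorem~\ref{thm.measu}, the genuinely new ingredients are the factors $q_K^{\alpha_j(s)}$ and $P_j(s)$: they arise because, when one integrates fiber by fiber along the $\VF^n$-coordinates, both the valuative ``depth'' of the resulting ball and the number of valuative levels swept out are allowed to depend on $s$.

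For the second step, I would apply Presburger cell decomposition on $\VG_{\ge 0}$ to refine each $S_j$ into arithmetic progressions $\{s : s \equiv r \pmod{m},\ s \ge s_0\}$ together with finitely many singletons, on each of which $\alpha_j$ is genuinely linear, say $\alpha_j(s) = a_j s + b_j$. On such a progression, the series
\[
\sum_{s \in S_j} q_K^{a_j s + b_j}\, P_j(s)\, T^s
\]
is a rational function of $q_K$ and $T$ by repeated use of the identity $\sum_{s \ge 0} z^s = 1/(1-z)$ and its derivatives in $z$, with denominator a product of factors of the form $1 - q_K^{c} T^{m}$. Summing the finitely many pieces and clearing denominators into a single $h(q_K,T) \in \QQ[q,T]$, the numerator becomes a $\QQ$-linear combination of terms $\#Y_{j,K}\cdot T^i$; since every original contribution to the measure is nonnegative, one can rearrange and use disjoint unions inside $\RF^{r}$ to express the coefficient of each $T^i$ as a single $\#Z_{i,K}$, matching the statement.

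The main obstacle is the first step: one must actually prove the parametric version of Theorem~\ref{thm.measu}, which calls for the right notion of constructible motivic function on $\VG_{\ge 0}$ and a careful application of cell decomposition \emph{in families} rather than for a single set. Once that normal form is in hand, the remaining Presburger cell decomposition and generating-function computation are essentially routine bookkeeping.
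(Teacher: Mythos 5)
Your proposal matches the paper's route: the paper obtains your step 1 by applying Theorem~\ref{thm.intu} to conclude $g_K(s) := \muK(X_{K,s})$ defines an element of $\cCu(\VG_{\ge 0})$, and your step 2 is exactly Theorem~\ref{thm.urat}, proved via Presburger cell decomposition (eventually-linear-on-congruence-classes) and geometric-series manipulations. The one point you gloss over is that writing the residue-field data as an $s$-independent $\#Y_{j,K}$ on each Presburger piece is not automatic: the paper invokes the orthogonality of $\RF$ and $\VG$ (a consequence of Denef--Pas quantifier elimination) to separate a generator of type $\#Z_{K,s}$ with $Z\subset\VG_{\ge0}\times\RF^r$ into a finite Presburger partition times $s$-independent residue-field counts; you should make this explicit.
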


Theorem~\ref{thm.murat.u} implies Theorem~\ref{thm.poin.u}:
\begin{itemize}
 \item Even though the Poincaré series in Definition~\ref{defn.poin} is not exactly equal to the left hand side of (\ref{eq.murat}),
       due to the factor $p^{sn}$ in (\ref{eq.mu.poin}), it is obtained from (\ref{eq.murat})
       by a substitution $T \mapsto p^nT$; such a substitution does not change the nature of the right hand side of (\ref{eq.murat}).
       (Note that we work with $K =\Qp$ and hence $q_K = p$.)
 \item On the right hand side of (\ref{eq.murat}), we use cardinalities of sets in the residue field which are $\LDP$-definable;
       the claim of Theorem~\ref{thm.poin.u} is that one can take sets definable in the ring language.
       It can be deduced from quantifier elimination (Theorem~\ref{thm.qe}) that this does not make a difference, i.e.,
       that any $\LDP$-definable set in the residue field is 
       definable in the pure ring language (for $p \gg 1$).
\end{itemize}

Thus now our goal is to prove Theorem~\ref{thm.murat.u}.

\section{Integrating}
\label{sect.int}

\subsection{Uniform $p$-adic integration}
\label{sect.intu}

To understand the measure of a definable set, we will integrate out one variable after the other.
For example, the measure of a set $Z_{\Qp} \subset \Qp^2$ will be determined by first
measuring the fibers $Z_{\Qp,x} = \{y \in \Qp \mid (x,y) \in Z_{\Qp}\}$
and then integrating:
\[
\muQp(Z_{\Qp}) = \int_{\Qp} \muQp(Z_{\Qp,x})\,dx.
\]
This approach has the advantage that we can treat
one dimension at a time; however, it means that instead of just measuring, we also need
to be able to integrate uniformly in $K$.

To make sense of such uniform integration, we need a
way to uniformly specify functions $K^n \to \RR$.
We do this in a way similar as we defined $\cCu^0$:
Given a definable set $X$, we let $\cR(X)$ be the ring of tuples
$(f_K)_K$, where $K$ runs over all local fields and $f_K$ is a function from $X_K$ to $\RR$,
and where two tuples are identified if they agree for big $p$.
We will define a sub-ring $\cCu(X) \subset \cR(X)$ the elements of which we call ``motivic functions'',
and we will prove that those motivic functions can be integrated uniformly in a similar way as
we already measured definable sets uniformly. More precisely, those rings are closed under
partial integration:

\begin{thm}\label{thm.intu}
Suppose that $S$ and $X \subset S \times \VF^n$ are definable sets, that
$f \in \cCu(X)$ is a motivic function (as we will define below) and that for every $K$ with $p \gg 1$ and for every $s \in S_K$,
the function $x \mapsto f_K(s, x)$ is $L^1$-integrable on the fiber $X_{K,s}$. Then the tuple $g = (g_K)_K$
of functions $g_K\colon S_K \to \RR$ given by
\[
g_K(s) =\int_{X_{K,s}} f_K(s, x)\,dx
\]
is an element of $\cCu(S)$.
\end{thm}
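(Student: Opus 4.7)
The plan is to integrate out one $\VF$-variable at a time and, for the one-variable case, to use a Denef--Pas style cell decomposition (a consequence of Theorem~\ref{thm.qe}) to put the integrand in a form where the integral can be computed explicitly in terms of generators of $\cCu$.

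First I would reduce to $n=1$ by induction on $n$. Viewing $X \subset S \times \VF^n$ as a definable set over $S' := S \times \VF^{n-1}$ and applying the one-variable case yields a function in $\cCu(S')$; $L^1$-integrability of the full integrand, combined with Fubini on each local-field fibre (which is just classical Lebesgue theory on $K$), lets one feed this back into the induction hypothesis one $\VF$-coordinate at a time until only $S$-variables remain.

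For $n=1$ I would apply cell decomposition to $X \subset S \times \VF$ over $S$, refined with respect to the data defining $f$, so that $f$ becomes ``cell-wise simple''. Each cell $C$ comes equipped with a definable centre $c$ defined on its $S$-projection and, after the change of variable $y = x - c(s)$ (whose valuation-theoretic Jacobian is $1$), takes the form
\[
C = \{(s,y) \in S \times \VF : (s,\ac(y),v(y)) \in A\}
\]
for some definable $A \subset S \times \RF \times \VG$, and $f$ on $C$ pulls back to $\tilde f(s,\ac(y),v(y))$ for some motivic function $\tilde f$ on $S \times \RF \times \VG$. Since $\{y \in \VF : \ac(y) = \xi,\ v(y) = \gamma\}$ is a ball of measure $q^{-\gamma-1}$ whenever $\xi \ne 0$, integration over $C_s$ becomes
\[
\int_{C_s} f_K(s,y)\,dy = \sum_{(\xi,\gamma) \in A_{K,s}} \tilde f_K(s,\xi,\gamma) \cdot q_K^{-\gamma-1},
\]
so the whole problem reduces to summing motivic functions over the $\RF$ and $\VG$ coordinates.

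The residue-field summation is essentially free: summing over $\xi$ in a definable piece of $\RF^\ell$ produces cardinalities of fibres of auxiliary definable subsets of $\RF$, which are precisely the generators of type~(1) in Definition~\ref{defn.cCu0}. The main obstacle is the $\VG$-summation. Using Presburger cell decomposition on the $\VG$-side, one reduces to sums of the form $\sum_{\gamma} \gamma^j q^{-a\gamma + b(s)}$ over definable subsets of $\ZZ$ parametrised by $s$. Finite pieces land visibly in $\cCu(S)$; on infinite tails the $L^1$-hypothesis forces $a \ge 1$, and the standard closed formula for $\sum_{\gamma \ge N} \gamma^j q^{-a\gamma}$ expresses the result as a polynomial in $N(s)$ and $q^{-N(s)}$ divided by a power of $q^a - 1$, each factor of which is again built from the generators of $\cCu$. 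Summing the contributions of the finitely many cells in the decomposition of $X$ yields $g \in \cCu(S)$, as desired.
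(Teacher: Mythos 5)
Your proposal follows the same route as the paper: reduce to $n=1$ and integrate out the single $\VF$-variable using a cell decomposition of $X$ over $S$ refined so that the $\RF$-data of $f$ depends only on $\ac$ and the $\VG$-data only on $v$ on each cell, then reduce to summing over $\ac$-fibres (giving type~(1) generators) and over Presburger-definable subsets of $\VG$ (giving the $q^{\alpha}$- and $\alpha$-type generators via geometric-style series). The one cosmetic difference is that you phrase cells as preimages of a single definable $A \subset S\times\RF\times\VG$, which then needs the $\RF$/$\VG$ orthogonality to split back into a product; the paper's adapted cell decomposition already delivers the split form directly.
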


\begin{expl}
By ``$L^1$-integrable'', I just mean that the integrals are finite and that they are not some kind of improper integrals;
that the functions are measurable will follow anyway from the definition of $\cCu(Z)$.
\end{expl}


And here is the definition of the rings $\cCu(X)$:

\begin{defn}\label{defn.cCun}
Fix a definable set $X$ (in any sorts) and let $\cR(X)$ be as above.
We define $\cCu(X) \subset \cR(X)$ to be the subring generated by the following tuples $f = (f_K)_K$; as usual, $k_K$ is the residue field
of $K$ and $q_K$ is the cardinality of $k_K$.
\begin{enumerate}
 \item
   $f_K(x) = \#Z_{K,x}$, where $Z \subset X \times \RF^r$ is a definable set (for any $r$)
 \item 
   $f_K(x) = 1/h(q_K)$, where $h \in \ZZ[q]$ is a polynomial
 \item
   $f_K(x) = \alpha_K(x)$, where $\alpha\colon X \to \VG$ is a definable function
 \item
   $f_K(x) = q_K^{\alpha_K(x)}$, where $\alpha\colon X \to \VG$ is a definable function
\end{enumerate}
\end{defn}

\begin{rem}
It might seem strange that we have both, (3) and (4). However, this is necessary to make the rings $\cCu(X)$
closed under integration. Intuitively, think of (3) as the logarithm of (4) and recall that in the reals,
integrating $1/x$ yields $\log x$. See also Example~\ref{ex.log}.
\end{rem}

Now let us already verify that we can use this to measure definable sets:

\begin{proof}[Proof that Theorem~\ref{thm.intu} implies Theorem~\ref{thm.measu}]
Given a definable set $X \subset \VF^n$, we apply Theorem~\ref{thm.intu} to the constant $1$ function on $X$
(which lies in $\cCu(X)$ by any of (1)--(4)). We obtain $g \in \cCu(\Pt)$ (where $\Pt$ is the one-point definable set),
and this $g$ (which is just a tuple consisting of one real number for each $K$) is just equal to $\muu(X)$; thus it remains to verify that $\cCu(\Pt) = \cCu^0$.

It is clear that $\cCu^0$
is equal to the ring generated by (1) and (2) of Definition~\ref{defn.cCun} (for $X = \Pt$). That (3) and (4)
do not yield anything new in $\cCu(\Pt)$ follows from quantifier elimination (Theorem~\ref{thm.qe}).
(The key step here is that any definable $\alpha\colon \Pt \to \VG$ takes only finitely many values for varying $K$.)
\end{proof}

Note that it suffices to prove Theorem~\ref{thm.intu} in the case $n = 1$: To obtain
the result for bigger $n$, we can then simply integrate out one variable after the other.
Thus, by formulating Theorem~\ref{thm.intu}, we indeed managed to reduce the proof of 
Theorem~\ref{thm.measu} to a problem which is essentially one-dimensional.

\begin{rem}
It might have been tempting to define $\cCu(X)$ differently, namely as the ring of functions in an expansion of the
valued field language having $\RR$ as a new sort. However, we do need $\cCu(X)$ to contain
all the generators listed in Definition~\ref{defn.cCun}, and 
the generators (3) and (4) would then allow to define new, strange subsets of the valued field.
\end{rem}

\begin{rem}
The rings $\cCu(X)$ as defined above are the smallest (non-trivial) ones which are closed
under integration (i.e., which satisfy Theorem~\ref{thm.intu}).
If one would like to
integrate other functions uniformly in $K$, one can also choose bigger rings.
In particular, there exists a version of uniform $p$-adic integration
where the rings contain additive characters $K \to \CC$; this version has
various applications to representation theory.
\end{rem}

\subsection{Deducing rationality of Poincaré series}
\label{sect.proof.rat}

Recall that one of our goals was to prove Theorem~\ref{thm.murat.u} about the rationality of
series obtained from the measure of a family of definable sets parametrized by $\VG_{\ge0}$.
We will now see that this follows from Theorem~\ref{thm.intu}. Given
$X \subset \VG_{\ge 0} \times \VF^n$, by applying Theorem~\ref{thm.intu}, we obtain
that the measures $g_K(s) := \mu_K(X_{K,s})$ form a motivic function $g \in \cCu(\VG_{\ge 0})$.
Thus Theorem~\ref{thm.murat.u} is implied by the following result:

\begin{thm}\label{thm.urat}
For every $f \in  \cCu(\VG_{\ge 0})$, there exist 
definable sets $Z_0, \dots, Z_d \subset \RF^r$ 
and a polynomial $h \in \QQ[q,T]$ such that
\begin{equation}\label{eq.urat}
\sum_{s = 0}^\infty f_K(s) T^s = \frac{\sum_{i=0}^d \#Z_{i,K}T^i}{h(q_K,T)},
\end{equation}
for all $K$ with $p \gg 1$.
\end{thm}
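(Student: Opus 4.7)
The plan is to decompose any $f \in \cCu(\VG_{\ge 0})$ into a finite sum of concrete building blocks whose individual generating series are rational of the desired shape, and then to recombine them.

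The first step is a structure theorem for motivic functions on $\VG_{\ge 0}$. Since $\LDP$ contains no symbols directly linking $\RF$ and $\VG$, Denef--Pas quantifier elimination (Theorem~\ref{thm.qe}) implies that every definable subset of $\VG_{\ge 0} \times \RF^r$ is a finite disjoint union of ``rectangles'' $A \times B$ with $A \subset \VG_{\ge 0}$ Presburger-definable and $B \subset \RF^r$ definable in the ring language; similarly, every definable function $\alpha \colon \VG_{\ge 0} \to \VG$ is piecewise $\ZZ$-affine. Applying this to the four generators of $\cCu(\VG_{\ge 0})$ in Definition~\ref{defn.cCun} and using that the collection is closed under sums and products (with $\chi_A \chi_{A'} = \chi_{A \cap A'}$), every $f \in \cCu(\VG_{\ge 0})$ can be written as a finite sum
\[
f_K(s) = \sum_j \chi_{A_j}(s)\, p_j(s)\, q_K^{c_j s + d_j}\, \frac{\#B_{j,K}}{h_j(q_K)},
\]
with $A_j \subset \VG_{\ge 0}$ Presburger-definable, $p_j \in \ZZ[s]$, $c_j, d_j \in \ZZ$, $B_j \subset \RF^{r_j}$ definable in the ring language, and $h_j \in \ZZ[q]$.

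Second, every Presburger-definable subset of $\NN$ is a finite disjoint union of arithmetic progressions $\{s_0 + km : k \in \NN\}$ together with a finite set; the finite part contributes only a polynomial in $T$ to the generating series. On such a progression, expanding $(s_0+km)^\ell$ binomially and summing geometric/polynomial-in-$k$ pieces gives
\[
\sum_{k \ge 0} (s_0 + km)^\ell\, q_K^{c(s_0 + km)+d}\, T^{s_0 + km}
= \frac{q_K^{cs_0+d}\, T^{s_0}\, P(q_K^{cm} T^m)}{(1 - q_K^{cm} T^m)^{\ell+1}},
\]
with $P \in \ZZ[U]$: a rational function in $T$ whose denominator lies in $\ZZ[q_K, T]$.

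Third, summing over all building blocks and clearing denominators produces a rational function whose denominator is a polynomial $h(q_K,T) \in \QQ[q,T]$ (a product of the $h_j(q_K)$ and factors $(1 - q_K^{cm} T^m)^r$) and whose numerator is a polynomial in $T$ whose coefficients are $\ZZ$-linear combinations of quantities $q_K^a \cdot \#B_{j,K}$. Since $q_K^a \cdot \#B_{j,K} = \#(B_j \times \RF^a)_K$ and disjoint unions convert addition of cardinalities into cardinalities of unions, each such coefficient may be rewritten as $\#Z_{i,K}$ for a suitable definable $Z_i \subset \RF^{r_i}$, with any negative signs absorbed into $h$. The main obstacle is the structural step: one needs to carefully extract from Denef--Pas quantifier elimination both the $\VG$--$\RF$ orthogonality of definable sets and the Presburger piecewise-affine description of definable functions to $\VG$. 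Once that is in hand, the rest is bookkeeping with classical generating-function identities.
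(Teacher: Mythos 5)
Your proposal follows essentially the same route as the paper's own sketch: invoke $\RF$--$\VG$ orthogonality (a consequence of Denef--Pas quantifier elimination) to split each generator of $\cCu(\VG_{\ge 0})$ into a residue-field part times a Presburger part, use the piecewise $\ZZ$-affine structure of definable functions $\NN \to \ZZ$ and the arithmetic-progression decomposition of Presburger subsets of $\NN$ to reduce to series of the shape $\sum_{s \equiv \lambda \bmod m} s^a q_K^b T^s$, and then carry out the geometric-series computation that the paper calls ``standard.'' You have simply made that computation and the preceding bookkeeping explicit.

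The one real wrinkle is your final sentence, where you assert that negative signs ``may be absorbed into $h$.'' What your argument actually produces for the numerator coefficients are $\ZZ$-linear combinations of quantities $q_K^a \#B_{j,K}$, and when distinct powers of $T$ carry coefficients of opposite sign this cannot be repaired simply by rescaling $h$: already $f = \chi_{\{0\}} - \chi_{\{1\}} \in \cCu(\VG_{\ge 0})$ gives the series $1 - T$, which cannot be written as $\bigl(\sum_i \#Z_{i,K} T^i\bigr)/h(q_K,T)$ with all coefficients genuine cardinalities. This is not a defect peculiar to your argument --- the theorem as stated, and the paper's own sketch, have exactly the same imprecision --- and the standard fix is to allow integer coefficients (equivalently, Grothendieck-ring classes) in the numerator; but as written your closing clause claims more than the computation delivers, and you should either state the conclusion with $\ZZ$-linear combinations of cardinalities or note that this is what the intended formulation should be.
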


Now note that all definable ingredients to $f$ only live in the value group and the residue field. From quantifier elimination, one can deduce that
there is essentially no definable connection between the residue field and the value group ($\RF$ and $\VG$ are ``orthogonal'').
This allows us to reduce the proof of Theorem~\ref{thm.urat} to a pure computation in the value group: We can assume that
$f$ is a product of generators of type (3) and (4) from Definition~\ref{defn.cCun} and that the functions
$\alpha$ appearing there are definable purely in $\VG$ (and hence do not depend on $K$).
To prove the rationality of a series obtained in this way, one uses that the language on $\VG$ is Presburger arithmetic,
which is well understood. In particular, using that definable functions $\NN \to \ZZ$ are eventually linear on congruence classes,
one reduces to series of the form
\[
\sum_{\substack{s \in \NN\\s \equiv \lambda \mod m}} s^a q_K^b T^s
\]
for $\lambda, m, a \in \NN$, $b \in \ZZ$, and a standard computation shows that such a series is a rational function in $q_K$ and $T$.

\section{Closedness under integration}
\label{sect.proof}

We reduced all our goals to proving Theorem~\ref{thm.intu}, and by the remark at the end of
Subsection~\ref{sect.intu}, it suffices to be able to integrate out a single variable:
Given a motivic function $f \in \cCu(X)$ for $X\subset S \times \VF$, we need to show that the function obtained by integrating
out the $\VF$-variable lies in $\cCu(S)$.
In this section, we will see the main ideas of how this works.
I will start by explaining the case where $f$ is the constant $1$ function on $X$; in other words, we prove:

\begin{prop}\label{prop.meas.cCu}
Suppose that $S$ and $X \subset S \times \VF$ are definable and consider $g = (g_K)_K$
given by
\[
g_K(s) = \mu_K(X_{K,s}).
\]
Then $g \in \cCu(S)$.
\end{prop}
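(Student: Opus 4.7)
The plan is to reduce the computation to a finite disjoint union of ``standard'' one-variable cells via the consequences of Denef--Pas quantifier elimination (Theorem~\ref{thm.qe}), and then compute the measure of each cell's fiber by hand.

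First, since the only cross-sort symbols in $\LDP$ are $v$ and $\ac$, quantifier elimination together with a Pas-style cell decomposition argument partitions $X$ into finitely many cells of the form
$$C_j \;=\; \{(s, x) \in T_j \times \VF \;:\; v(x - c_j(s)) \in Y_j(s),\ \ac(x - c_j(s)) \in Z_j(s)\},$$
where $T_j \subseteq S$ is definable, $c_j : T_j \to \VF$ is a definable center, and $Y_j \subseteq T_j \times \VG$ and $Z_j \subseteq T_j \times \RF$ are definable. There may in addition be ``point cells'' of the form $\{x = c_j(s)\}$ whose fibers have measure zero and can be ignored. Since $\cCu(S)$ is a ring and is stable under extension by zero from definable subsets, it suffices to show that for each $C_j$ the function $s \mapsto \mu_K(C_{j, K, s})$ lies in $\cCu(T_j)$.

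Second, for a cell as above, translation invariance reduces $C_{j, K, s}$ to $\{x : v(x) \in Y_{j,K}(s),\ \ac(x) \in Z_{j,K}(s)\}$. For each $y \in \ZZ$ and $a \in k_K^\times$, the set $\{z \in K : v(z) = y, \ac(z) = a\}$ is a single ball of Haar measure $q_K^{-y-1}$, so
$$\mu_K(C_{j, K, s}) \;=\; \#Z_{j, K}(s) \cdot \sum_{y \in Y_{j, K}(s)} q_K^{-y-1}.$$
The factor $\#Z_{j, K}(s)$ is a generator of type~(1). Since $\VG$ and $\RF$ are orthogonal (another consequence of quantifier elimination) and the theory of $\VG$ is Presburger, one can further refine $T_j$ so that $Y_j(s)$ is an arithmetic progression intersected with an interval described by definable bounds $\alpha(s), \beta(s) \in \VG \cup \{\pm\infty\}$. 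The resulting sum is then a finite combination of geometric series: one-sided tails evaluate to $q_K^{-\alpha(s)}/(q_K - 1)$, finite sums to $(q_K^{-\alpha(s)} - q_K^{-\beta(s)-1})/(q_K - 1)$, and these sit in $\cCu(T_j)$ as products of type~(2) generators (the denominator $1/(q_K-1)$) and type~(4) generators (the exponentials $q_K^{-\alpha(s)}$); finiteness of $\mu_K(X_{K,s})$ rules out the genuinely divergent case.

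The main obstacle is the first step: properly extracting, from raw quantifier elimination, a cell decomposition whose cells have the ``parametrized ball'' shape used above, including the refinement needed to separate $\VG$-parameters from $\RF$-parameters. Once that geometric structure is in place, the measure computation in steps two and three is essentially just the Haar measure of a ball combined with a routine Presburger summation.
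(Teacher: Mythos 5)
Your proposal follows essentially the same route as the paper: partition $X$ into families of one-variable cells (the paper's Almost-Theorem on cells over $S$), discard the measure-zero point cells, compute each cell's fiber measure as $\#Z_s$ times a geometric series in $q_K$, and recognize the result as a product of generators of types (1), (2), and (4). The only cosmetic difference is that you start from a slightly more general cell shape with an arbitrary definable $Y_j \subseteq T_j \times \VG$ and then refine to arithmetic progressions via Presburger, whereas the paper's cell decomposition is stated directly with $\alpha(s) < v(x) < \beta(s)$, $v(x) \equiv \lambda \bmod m$ (with $m,\lambda$ constant after a compactness/refinement step); also your closed forms implicitly take $m=1$, but this is easily repaired to $1/(1-q_K^{-m})$ as in the paper.
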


The main ingredient to the proof of this is cell decomposition: Any definable subset $X \subset \VF$
can be written as a finite disjoint union of certain kinds of simple sets called ``cells''.
The measure of a cell is easy to compute explicitly.
This also works in families, and then it yields Proposition~\ref{prop.meas.cCu}

The strategy to treat arbitrary functions $f \in \cCu(S \times \VF)$ is similar, using a refinement
of the Cell Decomposition Theorem which allows us to partition $\VF$ into cells in such a way that also a given function $f \in \cCu(\VF)$
is simple on each cell,
in particular allowing us to compute the integrals explicitly. Again this also works in families and it yields Theorem~\ref{thm.intu}.


\subsection{Measuring using cell decomposition}
\label{sect.proof.meas}

There are various versions of the Cell Decomposition Theorem in valued fields. For simplicity, I start stating a non-family version for a fixed local field $K$.

\begin{thm}\label{thm.cell.K}
For every definable set $X \subset \VF$ and for every $K$ with $p \gg 1$ (the bound depending on $X$), $X_K$ can be written as a finite disjoint union of cells.
\end{thm}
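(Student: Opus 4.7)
The plan is to reduce the statement to a preparation-type result about polynomials, with Denef--Pas quantifier elimination (Theorem~\ref{thm.qe}) as the entry point. Applied to the formula defining $X$, it produces an equivalent $\LDP$-formula without $\VF$-quantifiers. Since $X \subset \VF$ has only the single free variable $x$ in the $\VF$-sort, the only $\VF$-terms occurring are polynomials $f_1(x),\ldots,f_m(x) \in \ZZ[x]$, and they can appear only as the arguments of $v(\cdot)$ and $\ac(\cdot)$, or inside equalities $f_i(x) = 0$ (which cut out finitely many exceptional points and can be packaged into cells of dimension zero). So, outside a finite set, membership in $X_K$ is determined by a boolean combination of conditions on $v(f_i(x)) \in \VG$ and $\ac(f_i(x)) \in \RF$.

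The core task is then to find a finite partition of $K$ such that, on each piece $C$, every $f_i$ is ``linearized'': there is a center $c \in K$ and an integer $n_i \ge 0$ with
\[
v(f_i(x)) = n_i \cdot v(x-c) + \gamma_i, \qquad \ac(f_i(x)) = \xi_i \cdot \ac(x-c)^{n_i}
\]
for some $\gamma_i \in \VG$, $\xi_i \in k^\times$ depending only on $C$. Once such a partition is in hand, the conditions on $v(f_i(x))$ and $\ac(f_i(x))$ coming from the quantifier-free formula translate into conditions purely on $v(x-c)$ and $\ac(x-c)$, which is exactly what it means for $C$ to be a cell with center $c$.

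To produce such a partition, I would induct on the total degree $\sum_i \deg f_i$, using Hensel's lemma as the main local tool. For each polynomial $f_i$, one locates its roots (or, when they lie in a finite extension, their Galois orbits) and separates $K$ into ``close-to-a-root'' balls and ``far-from-all-roots'' regions. Away from all roots, a polynomial factors as $f_i(x) = a(x - c)^{\deg f_i} \cdot u(x)$ where $u$ is a unit of constant valuation and angular component on the ball; close to a root, one zooms in and repeats. The hypothesis $p \gg 1$ (with bound depending on $X$, hence on the $f_i$) guarantees that Hensel's lemma applies unobstructed and that wild ramification does not spoil the control on $\ac$. Cell decompositions for the individual $f_i$ are then merged into a common refinement.

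The main obstacle I expect is precisely this uniform polynomial preparation: roots may live in algebraic extensions of $K$, so centers must a priori be chosen in $K^{\mathrm{alg}}$ and then organized by Galois orbits in a way that yields centers definable over $K$; and one must ensure the preparations for the several polynomials $f_i$ can be taken on a common partition without blowing up the combinatorics. Handling this cleanly, while controlling the dependence of the bound on $p$, is the technical heart of the argument, while everything that happens afterwards — rewriting the VG-conditions on $v(f_i(x))$ and RF-conditions on $\ac(f_i(x))$ in terms of $v(x-c)$, $\ac(x-c)$ — is a direct substitution.
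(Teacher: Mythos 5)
Your proposal follows exactly the route the paper itself recommends: start from Denef--Pas quantifier elimination (Theorem~\ref{thm.qe}) so that membership in $X_K$ is governed by valuation and angular-component conditions on finitely many polynomials $f_i$, and then reduce to a polynomial preparation lemma that makes $v(f_i(x))$ and $\ac(f_i(x))$ monomial in $v(x-c)$ and $\ac(x-c)$ on each piece. The paper gives no more detail than that -- it only remarks that the statement is deducible from Theorem~\ref{thm.qe} and cites Pas \cite{Pas.cell} (observing that Pas actually proceeds in the opposite direction, from cell decomposition to QE) -- so your sketch is at least as complete as the paper's treatment and correctly locates the technical heart in the Hensel-lemma-based preparation of the $f_i$, with $p\gg 1$ precisely to control ramification relative to their degrees.
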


\begin{defn}\label{defn.cell}
A \emph{cell} $C \subset K$ is either
\begin{enumerate}
 \item a singleton $C = \{c\}$, or
 \item a set of the following form:
\[
\{c + x \mid \alpha < v(x) < \beta, v(x) \equiv \lambda \mod m, \ac(x) \in Z\}
\]
for some $c \in K$, $\alpha \in \ZZ \cup \{-\infty\}$, $\beta \in \ZZ \cup \{+\infty\}$, $m \in \NN_{\ge 1}$, $\lambda \in\{0, \dots, m-1\}$, $Z \subset k^\times$.
\end{enumerate}
\end{defn}

\begin{ex}
The set of squares in the valuation ring is a typical example of a cell; see Example~\ref{ex.qu} and Figure~\ref{fig.squares}.
\end{ex}

The measure of such a cell $C$ is easy to compute (in the same way as we computed the measure of the set of squares in Example~\ref{ex.qu}): If $C$ is a singleton, then $\mu_K(C) = 0$, so we only need to deal with the case (2).
The ``$c\,\,+$'' does not change the measure, so we can ignore it.
Then there are two conditions on $v(x)$. Let us first fix an $r \in \ZZ$ satisfying those conditions and look at
the corresponding set
\begin{equation}\label{eq.cellball}
\{x \in K \mid v(x) = r, \ac(x) \in Z\}.
\end{equation}
This is a disjoint union of $\#Z$ many balls, each of which has measure $q^{-r-1}$. Thus the total measure of $C$ is
\begin{equation}\label{eq.mucell}
\mu_K(C) = \#Z\cdot \sum_{\substack{\alpha < r < \beta\\r \equiv \lambda \mod m}} q^{-r-1}.
 \end{equation}
If $\alpha = -\infty$, then that sum is infinite. Otherwise, let us first assume that $\beta = +\infty$.
Then the sum can be rewritten as
\begin{equation}\label{eq.musum}
\mu_K(C) = \#Z \cdot \sum_{j = 0}^\infty q^{a-mj} = \frac{\#Z \cdot q^a}{1 - q^{-m}}
 \end{equation}
for some suitable $a \in \ZZ$. Finally, if $\beta$ is also finite, then (\ref{eq.mucell}) is equal to the difference of two expressions of the form (\ref{eq.musum}).

Now if we do all this in families and for varying $K$, we would like to say that the various ingredients to the definition of a cell --
namely $c$, $\alpha$, $\beta$, $m$, $\lambda$, $Z$ -- are definable. Actually, one can even assume that $m$ and $\lambda$ are constant
(using a compactness argument and by encoding a partition of $S$ into $Z$).
So we could hope for the following result:

\begin{almthm}\label{thm.cell.u}
Suppose that $S$ and $X \subset S \times \VF$ are definable sets. Then $X$ can be partitioned into
finitely many ``cells over $S$''.
\end{almthm}

\begin{defn}\label{defn.cell.u}
Fix a definable set $S$. A \emph{cell over $S$} is a definable set $C \subset S \times \VF$ of one of the following two forms:
\begin{enumerate}
 \item $C = \{(s, c(s)) \mid s \in S'\}$ for some definable set $S' \subset S$ and some definable function $c\colon S' \to \VF$.
 \item $C = \{(s, c(s) + x) \mid s \in S, \alpha(s) < v(x) < \beta(s), v(x) \equiv \lambda \mod m, \ac(x) \in Z_s\}$
   for some definable set $Z \subset S \times \RF^\times$, some definable functions
     $c\colon S \to \VF$, $\alpha,\beta\colon S \to \VG \cup \{\pm\infty\}$ and some integers $\lambda$ and $m$.
\end{enumerate}
\end{defn}

%

Unfortunately, Almost-Theorem~\ref{thm.cell.u} is only almost true. For example, consider $S = \VF$ and $X = \{(s, x) \in \VF^2 \mid x^2 = s\}$.
Then whenever $s \in S_K$ is a non-zero square, the fiber $X_{K,s}$ consists of two points (and hence is a union of two cells in the sense
of Definition~\ref{defn.cell}), but there is no definable way of separating this into two cells over $S$.
However, in some sense, this is the only aspect of Almost-Theorem~\ref{thm.cell.u} which is false, and for our purposes,
this is harmless, since whenever
several cells cannot be separated, they all have the same measure.
For this reason, in these notes, I will cheat and simply use the above almost-theorem.

Since I claimed (in Subsection~\ref{sect.qe}) that quantifier elimination is the only ingredient we use,
let me mention that it is not too difficult to deduce (the correct version of) Almost-Theorem~\ref{thm.cell.u} from Theorem~\ref{thm.qe} (though in the original
article \cite{Pas.cell} by Pas, it is done the other way round: quantifier elimination is deduced from cell decomposition).

\begin{proof}[Proof of Proposition~\ref{prop.meas.cCu}]
Since $X$ is a finite disjoint union of cells over $S$, and using the computation below Definition~\ref{defn.cell},
we obtain that $\mu_K(X_{K,s})$ is a sum of expressions of the form
\begin{equation}\label{eq.musum.fam}
\pm \frac{\#Z_s \cdot q_K^{a_K(s)}}{1 - q_K^{-m}}
\end{equation}
for some definable $Z \subset S \times \RF^\times$ and $a\colon S \to \VF$.
(The presence of some of the summands might depend on whether $\beta = +\infty$ or not,
but to make a summand disappear for some of the $s$, one can simply choose the corresponding $Z_s$ to be empty.)

Now (\ref{eq.musum.fam}) is indeed a product of factors of the form (1), (2) and (4) of Definition~\ref{defn.cCun}.
\end{proof}

\subsection{Integrating using cell decomposition}
\label{sect.closed.gen}

Now suppose we have a motivic function $f \in \cCu(X)$ for $X \subset S \times \VF$ and want to prove
that integrating out the $\VF$-variable yields a motivic function in $\cCu(S)$.
For this, we use a version of the Cell Decomposition Theorem which provides
cells that are ``adapted to $f$''. More precisely, recall from Definition~\ref{defn.cCun} that there are two kinds
of ingredients making functions in $\cCu(X)$ non-constant:
definable sets $Z \subset X \times \RF^r$ appearing in (1), and definable functions 
$\alpha\colon X \to \VG$ appearing in (3) and (4).

A cell decomposition can be adapted to such objects in the following sense. Again, for simplicity,
I state a non-parametrized single-field version:

\begin{thm}
Suppose that we are given a definable set $X \subset \VF$, finitely many definable sets $Z_i \subset X \times \RF^{r_i}$
and finitely many definable functions $\alpha_j\colon X \to \VG$. Then for every $K$ with $p \gg 1$,
$X_K$ can be written as a finite disjoint union of cells such that moreover, for each cell of the form
\begin{equation}\label{eq.cell.form}
C = \{c + x \mid \alpha < v(x) < \beta, v(x) \equiv \lambda \mod m, \ac(x) \in Z\}
\end{equation}
(as in Definition~\ref{defn.cell}), we have:
\begin{enumerate}
 \item for each $i$, the fiber $Z_{i,K,c+x}$ only depends on $\ac(x)$ (for $c+x \in C$);
 \item for each $j$, the function value $\alpha_{j,K}(c+x)$ only depends on $v(x)$ (for $c+x \in C$).
\end{enumerate}
\end{thm}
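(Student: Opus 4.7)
The strategy is to reduce the adapted cell decomposition to a ``preparation'' result for polynomials and then invoke the basic cell decomposition of Almost-Theorem~\ref{thm.cell.u}. Quantifier elimination (Theorem~\ref{thm.qe}) does the bulk of the work by reducing the auxiliary data---the sets $Z_i$ and the functions $\alpha_j$---to a finite list of polynomials.

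First I would apply Theorem~\ref{thm.qe} to the defining formulas of the $Z_i$ and to the graphs of the $\alpha_j$. After eliminating $\VF$-quantifiers, these formulas contain the $\VF$-variable $y$ only through subterms of the form $v(P_\ell(y))$ and $\ac(P_\ell(y))$ for some finite collection of polynomials $P_1, \dots, P_N \in \valring[y]$. In particular, each $\alpha_j(y)$ becomes a $\ZZ$-linear combination of such valuations.

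Next I would refine an initial cell decomposition so that on every non-singleton cell $C$, parametrized as $c + x$, the polynomials $P_\ell$ become ``monomial in $x$'': there exist integers $e_\ell$, elements $\gamma_\ell \in \VG$, and constants $\xi_\ell \in \RF^\times$ such that for all $c + x \in C$,
\[
v(P_\ell(c+x)) = \gamma_\ell + e_\ell \cdot v(x), \qquad \ac(P_\ell(c+x)) = \xi_\ell \cdot \ac(x)^{e_\ell}.
\]
Granted such preparation, condition (2) is immediate, since $\alpha_j(c+x)$ becomes a $\ZZ$-linear combination of $\gamma_\ell + e_\ell v(x)$. For condition (1), the formula defining the fiber $Z_{i,K,c+x}$ splits into a residue-field part depending only on $\ac(x)$ (via the $\xi_\ell \cdot \ac(x)^{e_\ell}$) and a value-group part depending on $v(x)$ (via the $\gamma_\ell + e_\ell v(x)$). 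A further refinement by Presburger-definable congruence and affine conditions on $v(x)$ makes the truth value of the value-group part constant on each sub-cell, leaving a pure residue-field condition in $\ac(x)$. The resulting sub-cells remain cells in the sense of Definition~\ref{defn.cell} (with a larger modulus $m$ and new $\alpha, \beta, \lambda$).

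The main obstacle is establishing the preparation step. I would prove it by induction on $\max_\ell \deg P_\ell$, using the Taylor expansion $P_\ell(c+x) = \sum_k P_\ell^{(k)}(c)\, x^k/k!$ and choosing the center $c$ so that exactly one term dominates in valuation for all $x$ in the cell's range of valuations. Locating suitable centers requires Hensel's lemma to approximate roots of $P_\ell$, and handling the finitely many ``dominance regimes'' of $v(P_\ell(c+x))$ is what forces the cellular congruence and affine conditions on $v(x)$ in the first place. Once the preparation is in hand, the rest of the proof is essentially bookkeeping with Presburger arithmetic on $v(x)$.
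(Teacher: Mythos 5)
The paper does not actually prove this theorem; it states it as a ready-made tool in Subsection~\ref{sect.closed.gen} and only sketches how to use it. The only hint at provenance is the earlier remark around Almost-Theorem~\ref{thm.cell.u} that cell decomposition can be deduced from Theorem~\ref{thm.qe} (citing~\cite{Pas.cell}). So there is no in-paper proof to compare against, only the general indication that the route goes through Denef--Pas quantifier elimination.

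Your outline does match that expected strategy, and most of it is sound: reduce $Z_i$ and $\alpha_j$ via Theorem~\ref{thm.qe} to formulas built from $v(P_\ell(y))$ and $\ac(P_\ell(y))$, prepare the $P_\ell$ on each cell so that $v(P_\ell(c+x))$ and $\ac(P_\ell(c+x))$ become affine in $v(x)$ and monomial in $\ac(x)$ respectively, and then refine. The one place you are too quick is the handling of the $\alpha_j$. Quantifier elimination gives you that the \emph{graph} of $\alpha_j$ is a boolean combination of residue-field conditions in the $\ac(P_\ell(y))$ and Presburger conditions in the $v(P_\ell(y))$ and the target variable; it does \emph{not} follow that $\alpha_j(y)$ is a $\ZZ$-linear combination of the $v(P_\ell(y))$. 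After preparation, $\alpha_j(c+x)$ is only \emph{piecewise} affine in $v(x)$, and the pieces are carved out both by conditions on $\ac(x)$ (through the $\xi_\ell \ac(x)^{e_\ell}$) and by congruence and order conditions on $v(x)$. To obtain condition~(2) you therefore need exactly the same extra refinement move you already perform for condition~(1), but in the other direction: partition the cell's set $Z$ (the $\ac$-side condition) into finitely many pieces on which every residue-field ingredient of $\alpha_j$ is constant, and refine the $v(x)$-range by congruences and intervals so the remaining Presburger pieces agree. Each resulting subcell is still a cell in the sense of Definition~\ref{defn.cell} (with larger modulus $m$ and smaller $Z$), and on it $\alpha_j(c+x)$ genuinely depends only on $v(x)$. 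With that step made explicit, your argument is correct in spirit; the preparation step via Hensel centers and Taylor dominance is indeed the real content, and your bookkeeping for condition~(1) is right.
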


\begin{rem}
Actually, this formulation is slightly imprecise, since it might be possible to write a cell in the form
(\ref{eq.cell.form}) in different ways. One really should say: Each cell $C$ can be written in the form (\ref{eq.cell.form})
in such a way that (1) and (2) hold. 
\end{rem}

Now given $X \subset \VF$ and $f \in \cCu(X)$, a similar computation as the one below Definition~\ref{defn.cell} can be used to determine
the integral $\int_C f_K(x)\,dx$ over a cell $C \subset X_K$ adapted to all the ingredients of $f$: First, we
neglect the ``$c\,\,+$'' of the cell and we
write the integral as a sum of separate integrals over the sets $\{x \in K \mid v(x) = r, \ac(x) \in Z\}$.
The residue field ingredients to those integrals can be pulled out of the entire sum, so that we are left with
an expression involving only the ingredients $\alpha_{j,K}$ of $f$, which now can be considered as functions in $r = v(x)$.
In particular, the functions $r \mapsto \alpha_{j,K}(x)$ are Presburger definable, and 
we can finish using the same technique as in the proof of rationality of series in Subsection~\ref{sect.proof.rat}.
Instead of giving more details, let me give two examples:

\begin{ex}\label{ex.bla}
Suppose that $C = \{x \mid 0 \le v(x), \ac(x) = 1\}$ and $f_K(x) = v(x)$.
Then for $0 \le i < \beta$, the integral of $f_K$ over the ball $B_r := \{x \mid v(x) = r, \ac(x) = 1\} \subset C$
is equal to $\mu(B_r) \cdot r = q^{-r-1} \cdot r$.
Thus
\[
\int_{C_K} f_K(x)\,dx = \sum_{r = 0}^{\infty} q^{-r-1} \cdot r = \frac{1}{(q-1)^2}.
\]
\end{ex}

\begin{ex}\label{ex.log}
Suppose that $C = \{x \mid 0 \le v(x) < \beta, \ac(x) = 1\}$ and $f_K(x) = q^{v(x)}$.
Then for $0 \le r < \beta$, the integral of $f_K$ over the ball $B_r := \{x \mid v(x) = r, \ac(x) = 1\} \subset C$
is equal to $\mu(B_r) \cdot q^{r} = q^{-r-1} \cdot q^{r} = q^{-1}$. Thus $\int_{C} f_K(x)\,dx = \beta\cdot q^{-1}$.
Since when we will look at this in families, $\beta$ will be a definable function of the parameters,
one sees how functions of the form (3) in Definition~\ref{defn.cCun} arise.
\end{ex}

As in Subsection~\ref{sect.proof.meas}, all this also works in families, thus
yielding a proof of Theorem~\ref{thm.intu}.

\section{Motivic integration in other valued fields}
\label{sect.mot}

To end these notes, I will explain how one obtains a version of motivic integration which works in other valued fields than
local ones. There are several different approaches to this. There is a very nice survey by Hales \cite{Hal.motMeas} about
the original approach by Kontsevich. Below, I sketch two more modern approaches by
Cluckers--Loeser \cite{CL.mot} and by Hrushovski--Kazhdan \cite{HK.motInt}.

\subsection{Cluckers--Loeser motivic integration}

This version of motivic integration is designed for valued fields of the form $K = k((t))$, for arbitrary $k$ of characteristic $0$.
The idea is to define, for every definable set $X$, a ring $\cCmot(X)$ which is an abstract analogue of our $\cCu(X)$:
instead of being a ring of tuples of functions, $\cCmot(X)$ is given in terms of generators and relations.
Moreover, now, when we speak of definable sets (e.g.\ concerning the above $X$), instead of
working in the theory $\Tloc$, we work in the theory I denoted by $T_0$ in Subsection~\ref{sect.th}: the theory of
henselian valued fields with value group elementarily equivalent to $\ZZ$ and with residue characteristic $0$.

Specifying the generators of $\cCmot(X)$ in analogy to Definition~\ref{defn.cCun} is easy: 
For example, as an analogue of Definition~\ref{defn.cCun} (1), we have one generator for every definable set $Z \subset X \times \RF^r$;
a natural notation for this generator is ``$x \mapsto \#Z_x$'', even though this has no real meaning now.
Similarly, we have generators (2) ``$x \mapsto 1/h(\#\RF)$'', (3) ``$x \mapsto \alpha(x)$'' and (4) ``$x \mapsto \#\RF^{\alpha(x)}$''
for polynomials $h$ and definable maps $\alpha\colon X \to \VG$.

A more subtle task consists in finding the right relations for $\cCmot(X)$. I will not list all of them here,
but let me just say that all of them are natural if one thinks of the intended meaning. For example, 
the (1)-generator
``$x \mapsto \#\RF$'' is equal to the (4)-generator ``$x \mapsto \#\RF^{1}$''.

\begin{rem}
Deciding which relations to use exactly is not entirely straightforward.
For example, if $Z$ is the set of non-zero squares in the residue field and $Z'$ is the set of non-squares,
then $\#Z_K = \#Z'_K$ for all $K$ with residue characteristic $\ge 3$, so that $Z$ and $Z'$ yield the same
element of $\cCu(\Pt)$. Nevertheless, they should not be made equal in $\cCmot(\Pt)$, intuitively because
in $K = \CC((t))$, $Z = \CC^\times$ but $Z' = \emptyset$.
\end{rem}

In the uniform $p$-adic setting, we proved that the rings $\cCu(X)$ are closed under integrating out
some of the variables. For the rings $\cCmot(X)$, it is not even clear what integration
is supposed to be. What one does is: one defines motivic integration maps between the different rings $\cCmot(X)$
which mimic the computations we did for $p$-adic integration. For example, one defines an
integration map $\cCmot(\VF) \to \cCmot(\Pt), f \mapsto$ ``$\int_{\VF} f$'' as follows.
\begin{itemize}
 \item Choose a cell decomposition of $\VF$ adapted to $f$ and integrate $f$ on each cell separately.
   (The integral $\int_{\VF} f$ is then defined to be the sum of the integrals over the cells.)
 \item The integral of $f$ over a cell adapted to $f$ is defined explicitly, in analogy to the computations
    sketched at the end of Subsection~\ref{sect.closed.gen}.
\end{itemize}

\begin{ex}
Suppose that $X = \{x \in \VF \mid 0 \le v(x), \ac(x) = 1\}$ and $f(x) = v(x)$. By analogy to Example~\ref{ex.bla}, one
defines $\int_X f := \frac1{(\#\RF - 1)^2}$.
\end{ex}

For this definition to make sense, one has to verify that it does not depend on the chosen cell decomposition.
Moreover, one would like to know that motivic integration does indeed behave like integration:
It should satisfy the Fubini theorem (i.e., when integrating out several variables, the order of the variables should not matter)
and a change of variables formula. All these things were trivial in the uniform $p$-adic case, since
there, integration was just field-wise Lebesgue integration (for which all of this holds). In the
motivic setting, proving these things is the main work. (Note that for these things to hold, it is important that the
rings $\cCmot(X)$ were defined using the right relations.)

For various applications to algebraic geometry (like the one by Kontsevich mentioned in the introduction), it is enough to have any theory of (motivic)
integration which has the above properties.
Moreover, this kind of motivic integration can replace the uniform $p$-adic integration introduced earlier
in these notes, since we have natural maps $\cCmot(X) \to \cCu(X)$ commuting with integration.
(This should be clear from the way we defined motivic integration.)
Nevertheless, it would be more satisfactory if we knew that our notion of motivic integration is also
in some sense natural and/or unique. Cluckers--Loeser prove that in some sense it is, but
the approach by Hrushovski--Kazhdan provides a much nicer result of this kind, so now I will explain their approach.

\subsection{Hrushovski--Kazhdan motivic integration}

Hrushovski and Kazhdan introduced two new ideas to the theory of motivic integration. One is that
one can simplify things by working in algebraically closed valued fields instead of henselian ones.
(One can then nevertheless deduce results about non-algebraically closed fields.) The other one is
to define motivic integration by a universal property making it
``the most general theory of integration in valued fields''. In these notes, I will only consider the second idea:
I will stick to valued fields of the form $K = k((t))$ but explain how the universal property approach works.

For simplicity, let us go back to the point of view that to integrate, one just needs a measure.
Thus we simply want to define ``the most general map from the class of definable sets into a ring $\cCmot^0$ which behaves like a measure''.
Formally, this means that we let $\cCmot^0$ be generated by elements $[X]$ for all definable sets $X$, and we quotient by
the relations a measure is supposed to satisfy. For example, if $X, Y \subset \VF^n$ are disjoint, then $[X \cup Y] = [X] + [Y]$,
and if we have a ``measure-preserving'' bijection $\alpha\colon X\to Y$, then $[X] = [Y]$.
(One needs to define which bijections should be considered as measure-preserving. This is done in analogy to
the $p$-adic world; for example, if $X, Y \subset \VF$, then a differentiable map $X \to Y$ whose
derivative has valuation $0$ everywhere is measure-preserving.)

I will not go into the details of how one then defines $\cCmot(X)$ and integration using this approach,
but note that one gets many results for free: one has a well-defined notion of motivic integration, it satisfies
all the properties one would like it to satisfy (Fubini, change of variables), and it specializes to
Cluckers--Loeser motivic integration, simply because Cluckers--Loeser motivic integration satisfies all the properties used by Hrushovski--Kazhdan
in the definition of $\cCmot^0$ (and of the $\cCmot(X)$).

This time, however, the challenge is to determine $\cCmot^0$, and more generally $\cCmot(X)$;
otherwise, the definition of motivic integration is just useless general non-sense.
In the setting Hrushovski and Kazhdan work in, namely for algebraically closed valued fields,
the ring $\cCmot^0$ is a bit more complicated than the one of Cluckers--Loeser:
Whereas the Cluckers--Loeser-$\cCmot^0$ is a kind of Grothendieck ring of definable sets in the residue field,
the Hrushovski--Kazhdan-$\cCmot^0$ also uses definable sets in the value group.
However, by work in progress, it seems that if one applies the universal construction of
Hrushovski--Kazhdan to the fields $K = k((t))$, then one 
obtains exactly the same rings $\cCmot(X)$ as with the definition of Cluckers--Loeser.
In other words, after all, Cluckers--Loeser motivic integration was already natural and as general as possible.

\bibliographystyle{siam}
\bibliography{references}

\end{document}